\pgfplotsset{compat=1.15}
\renewcommand{\leq}{\ensuremath{\leqslant}}
\renewcommand{\geq}{\ensuremath{\geqslant}}
\renewcommand{\le}{\ensuremath{\leqslant}}
\renewcommand{\ge}{\ensuremath{\geqslant}}
\newcommand{\minimize}[2]{\ensuremath{\underset{\substack{{#1}}}%
{\text{\rm minimize}}\;\;#2}}
\newcommand{\scal}[2]{{\left\langle{{#1}\mid{#2}}\right\rangle}}
\newcommand{\HH}{\ensuremath{{\mathcal H}}}
\newcommand{\GG}{\ensuremath{{\mathcal G}}}
\newcommand{\Id}{\ensuremath{\operatorname{Id}}}
\newcommand{\RR}{\ensuremath{\mathbb{R}}}
\newcommand{\RPP}{\ensuremath{\left]0,+\infty\right[}}
\newcommand{\RX}{\ensuremath{\left]-\infty,+\infty\right]}}
\newcommand{\NN}{\ensuremath{\mathbb N}}
\newcommand{\ran}{\ensuremath{\text{\rm ran}\,}}
\newcommand{\net}{\widetilde{J}}
\newcommand{\netav}{Q}
\newcommand{\jac}{\pmb{\nabla}}
\newcolumntype{L}{>{$}l<{$}} % math-mode version of "l" column type
\newcolumntype{C}{>{$}c<{$}} % math-mode version of "l" column type
\crefname{hypothesis}{Hypothesis}{Hypotheses}
\title{Learning Maximally Monotone Operators\\ for Image Recovery\thanks{Submitted to the editors 12/23/2020.
\funding{
M. Terris would like to thank Heriot-Watt University for the PhD funding. This work was supported by the UK Engineering and Physical Sciences Research Council (EPSRC) under grant EP/T028270/1.
The work of J.-C. Pesquet was supported by Institut Universitaire de France and the ANR Chair in AI BRIGEABLE.}}}
\author{
Jean-Christophe Pesquet\thanks{
Universit\'e Paris-Saclay, Inria, 
Center for Visual Computing, Gif sur Yvette, France (\email{jean-christophe.pesquet@centralesupelec.fr}).}
\and Audrey Repetti\thanks{Second corresponding author. School of Mathematics and Computer Sciences and School of Engineering and Physical Sciences, Heriot-Watt University, Edinburgh, UK. Maxwell Institute for Mathematical
Sciences, Bayes Centre, Edinburgh, UK (\email{a.repetti@hw.ac.uk}).}
\and Matthieu Terris\thanks{First corresponding author. School of Engineering and Physical Sciences, Heriot-Watt University, Edinburgh, UK (\email{mt114@hw.ac.uk}).}
\and Yves Wiaux\thanks{School of Engineering and Physical Sciences, Heriot-Watt University, Edinburgh, UK (\email{y.wiaux@hw.ac.uk}).}}
\begin{document}

\maketitle

% REQUIRED
\begin{abstract}
We introduce a new paradigm for solving regularized variational problems. These are typically formulated to address ill-posed inverse problems encountered in signal and image processing. The objective function is traditionally defined by adding a regularization function to a data fit term, which is subsequently minimized by using iterative optimization algorithms. Recently, several works have proposed to replace the operator related to the regularization by a more sophisticated denoiser. These approaches, known as plug-and-play (PnP) methods, have shown excellent performance. Although it has been noticed that, under some Lipschitz properties on the denoisers, the convergence of the resulting algorithm is guaranteed, little is known about characterizing the asymptotically delivered solution.  In the current article, we propose to address this limitation.
More specifically, instead of employing a functional regularization, we perform an operator regularization, where a maximally monotone operator (MMO) is learned in a supervised manner. This formulation is flexible as it allows the solution to be characterized through a broad range of variational inequalities, and it includes convex regularizations as special cases. From an algorithmic standpoint, the proposed approach consists in replacing the resolvent of the MMO by a neural network (NN). 
We present a universal approximation theorem proving that nonexpansive NNs are suitable models for the resolvent of a wide class of MMOs. The proposed approach thus provides a sound theoretical framework for analyzing the asymptotic behavior of first-order PnP algorithms. In addition, we propose a numerical strategy to train NNs corresponding to resolvents of MMOs.  We apply our approach to image restoration problems and demonstrate its validity in terms of both convergence and quality.
\end{abstract}

% REQUIRED
\begin{keywords}
Monotone operators, neural networks, convex optimization, plug-and-play methods, inverse problems, computational imaging, nonlinear approximation
\end{keywords}

% REQUIRED
\begin{AMS}
  47H05, % Monotone operators and generalizations
  90C25, % Mathematical programming: Convex programming
  90C59, % Operations research, mathematical programming: Mathematical programming: Approximation methods and heuristics
  65K10, % Numerical analysis: Mathematical programming, optimization and variational techniques: Optimization and variational techniques
  49M27, % Decomposition methods: Calculus of variations and optimal control; optimization: Decomposition methods
  68T07, % Artificial intelligence: Artificial neural networks and deep learning
  68U10, % Computer Sciences: Computing methodologies for image processing
  94A08. % Image processing
\end{AMS}

% *********************************************************
% *********************************************************
\section{Introduction}
\label{Sec:Intro}
% *********************************************************
% *********************************************************

In many problems in data science, in particular when dealing with inverse problems, a variational approach is adopted which amounts to
\begin{equation}\label{e:min}
\minimize{x\in \HH}
{f(x)+g(x)}
\end{equation}
where $\HH$ is the underlying data space, here assumed to be a real Hilbert space, $f\colon\HH \to \RX$ is a data fit (or data fidelity) term related to some available data $z$ (observations), and $g\colon \HH\to \RX$ is some regularization function. 
The data fit term is often derived from statistical considerations on the observation model through the maximum likelihood principle. For many standard noise distributions, the negative log-likelihood corresponds to a smooth function (e.g. Gaussian, Poisson-Gauss, or logistic distributions).
The regularization term is often necessary to avoid overfitting or to overcome ill-posedness problems. A vast literature has been developed on the choice of this term. It often tends to promote the smoothness of the solution or to enforce its sparsity by adopting a functional analysis viewpoint. 
Good examples of such regularization functions are the total variation semi-norm \cite{rudin1992nonlinear} and its various extensions \cite{bredies2010GTV,condat2014SLTV}, and penalizations based on wavelet (or ``x-let'') frame representations \cite{daubechies2004}. Alternatively, a Bayesian approach can be followed where this regularization is viewed as the negative-log of some prior distribution, in which case the minimizer of the objective function in \eqref{e:min} can be understood as a Maximum A Posteriori (MAP) estimator. In any case, the choice of this regularization introduces two main roadblocks. First, the function $g$ has to be chosen so that the minimization problem in \eqref{e:min} be tractable, which limits its choice to relatively simple forms. 
Secondly, the definition of this function involves some parameters which need to be set. The simplest case consists of a single scaling parameter usually called the regularization factor, the choice of which is often very sensitive on the quality of the results. 
Note that, in some works, this regularization function is the indicator function of some set encoding some smoothness or sparsity constraint. For example, it can model an upper bound on some functional of the discrete gradient of the sought signal, this bound playing then a role equivalent to a regularization parameter \cite{combettes2004image}. 
Using an indicator function can also model standard constraints in some image restoration problems, where the image values are bounded \cite{abdulaziz2019wideband,birdi2018sparse}.
In order to solve such problems, a wide range of proximal splitting algorithms have been developed in the past decades \cite{combettes2011proximal}.

By denoting by $\Gamma_{0}(\HH)$ the class of lower-semicontinuous convex functions from $\HH$ to $\RX$ with
a nonempty domain, let us now assume that 
both $f$ and $g$ belong to $\Gamma_{0}(\HH)$. The Moreau
subdifferentials of these functions will be denoted by $\partial f$
and $\partial g$, respectively. Under these convexity assumptions, if 
\begin{equation}\label{e:zer}
0 \in \partial f(x) + \partial g(x),
\end{equation}
then $x$ is a solution to the minimization problem \eqref{e:min}. Actually,
under mild qualification conditions the sets of solutions to \eqref{e:min}
and \eqref{e:zer} coincide \cite{bauschke2017convex}. By reformulating the original optimization problem
under the latter form, we have moved to the field of variational inequalities. 
Interestingly, it is a well-established fact that the subdifferential of a function
in $\Gamma_{0}(\HH)$ is a maximally monotone operator (MMO), which means
that \eqref{e:zer} is a special case of the following monotone inclusion
problem:
\begin{equation}\label{e:mon}
\mbox{Find $x\in \HH$ such that }
0 \in \partial f(x) + A(x),
\end{equation}
where $A$ is an MMO. We recall that a multivalued operator $A$ defined on $\HH$ is maximally monotone if and only if, for every $(x_{1},u_{1})\in \HH^2$,
\begin{equation}\label{e:MMOdef}
u_{1}\in Ax_{1} \quad \Leftrightarrow \quad (\forall x_{2}\in \HH)(\forall u_{2}\in A x_{2}) \scal{x_{1}-x_{2}}{u_{1}-u_{2}} \ge 0,
\end{equation}
see e.g. \cite{bauschke2017convex, combettes2020fixed} for a background on monotone operator theory. 
Actually the class of monotone inclusion problems is much wider than the class of convex optimization problems and, in particular, includes saddle point problems and game theory equilibria \cite{combettes2018monotone}. 
It is also worth noticing that many existing algorithms for solving convex optimization problems are particular cases of algorithms designed for solving monotone inclusion problems. The latter often leverage the resolvent operator of $A$ in \eqref{e:mon}, which reduces to a proximal operator when $A=\partial g$ as in \eqref{e:zer} \cite{bauschke2017convex}.
This suggests that it is more flexible, and probably more efficient, to substitute \eqref{e:mon} for \eqref{e:min} in problems encountered in data science.
In other words, instead of performing a functional regularization, we can introduce an operator regularization through
the maximally monotone mapping $A$. Although this extension of  \eqref{e:min} may appear both natural and elegant, it induces
a high degree of freedom in the choice of the regularization strategy. However, if we except the standard case when $A=\partial g$, it is 
hard to have a good intuition about how to make a relevant choice for $A$. To circumvent this difficulty, our proposed approach will
consist in learning $A$ in a supervised manner by using some available dataset in the targeted application. 

Since a MMO is fully characterized by its resolvent, we propose in this paper to learn the resolvent of $A$. We show that this can be formulated as learning a denoiser with an appropriate 1-Lipschitz condition. Thus, our approach enters into the family of so-called plug-and-play (PnP) methods, where one replaces the proximity operator in an optimization algorithm with a denoiser, e.g. a denoising neural network (NN) \cite{zhang2017learning}.
It is worth mentioning that by doing so, any algorithm whose proof is based on MMO theory, e.g., Forward-Backward (FB), Douglas-Rachford, Peaceman-Rachford, primal-dual approaches, and more \cite{bauschke2017convex, combettes2011proximal, condat2013primal, komodakis2015playing, vu2013splitting} can be turned into a PnP algorithm.
To ensure the convergence of such PnP algorithms, according to fixed point theory (under mild conditions) it is sufficient for the denoiser to be firmly nonexpansive \cite{combettes2020fixed}. 
In this context, \eqref{e:mon} offers an elegant characterization of the solution.

Unfortunately, most pre-defined denoisers do not satisfy this assumption, and learning a firmly nonexpansive denoiser remains challenging \cite{ryu2019plug,terris2020building}. The main bottleneck is the ability to tightly constrain the Lipschitz constant of a NN. During the last years, several works proposed to control the Lipschitz constant (see e.g. \cite{anil2018sorting, bibi2018deep, cisse2017parseval, hertrich2020convolutional, miyato2018spectral, ryu2019plug, sedghi2018singular, terris2020building, yoshida2017spectral}). Nevertheless, only few of them are accurate enough to ensure the convergence of the associated PnP algorithm and often come at the price of strong computational and architectural restrictions (e.g., absence of residual skip connections) \cite{bibi2018deep, hertrich2020convolutional, ryu2019plug, terris2020building}. 
The method proposed in \cite{bibi2018deep} allows a tight control of convolutional layers, but in order to ensure the nonexpansiveness of the resulting architecture, one cannot use residual skip connections, despite their wide use in NNs for denoising applications. 
In \cite{hertrich2020convolutional}, the authors propose to train an averaged NN by projecting the full convolutional layers on the Stiefel manifold and showcase the usage of their network in a PnP algorithm. Yet, the architecture proposed by the authors remains constrained by proximal calculus rules.
In our previous work \cite{terris2020building}, we proposed a method to build firmly nonexpansive convolutional NNs; to the best of our knowledge, this was the first method ensuring the firm nonexpansiveness of a denoising NN. However, the resulting architecture was strongly constrained and did not improve over the state-of-the-art. Since building firmly nonexpansive denoisers is difficult, many works on PnP methods leverage ADMM algorithm  which may appear easier to handle in practice \cite{ryu2019plug}. However, the convergence of ADMM requires restrictive conditions on the involved linear operators~\cite{komodakis2015playing}.

Alternative approaches have been proposed in the literature to circumvent these difficulties. Ryu et al.~\cite{ryu2019plug} proposed the first convergent NN-based PnP algorithm in a more general framework, namely RealSN. Nevertheless, in this work, the authors rely on \cite[Assumption A]{ryu2019plug}, which may potentially lead to expansive networks (see \cite[Lemma 9 \& 10]{ryu2019plug}), and unstable PnP algorithms. The regularization by denoising (RED) approach \cite{ahmad2020plug, cohen2020regularization} investigates situations when the PnP algorithm converges to a solution to a minimization problem.
In particular, the minimum mean square error (MMSE) denoiser can be  employed \cite{ahmad2020plug, xu2020provable}. However, as underlined by the authors, the denoising NN is only an approximation to  the MMSE regressor. The authors of \cite{cohen2020regularization} proposed RED-PRO, an extension of RED  within the framework of fixed point theory. Under a demi-contractivity assumption, the authors show that the PnP algorithm converges to a solution to a constrained minimization problem. 

The contribution of this paper is twofold. We first provide a universal approximation theorem for a wide class of monotone operators. In particular, we show that one can approximate the resolvent of a \emph{stationary} MMO as closely as desired with a firmly nonexpansive NN. We then propose a practical framework to impose the firm nonexpansiveness of our NN.
To do so, we regularize the training loss of the NN with the spectral norm of the Jacobian of a suitable nonlinear mapping. Although the resulting NN could be plugged into a variety of iterative algorithms, our work is focused on the standard FB algorithm. We illustrate the convergence of the corresponding PnP scheme in an image restoration problem. We show that our method compares positively in terms of quality to both state-of-the-art PnP methods and regularized optimization approaches.

This article is organized as follows. In \cref{sec:2}, we recall how MMOs can be mathematically characterized and explain how their resolvent can be modeled by an averaged residual neural network. We also establish that NNs are generic models for a wide class of MMOs.
In \cref{sec:3}, we show the usefulness of learning MMOs in the context of plug-and-play (PnP) first-order algorithms employed for solving inverse problems. We also describe the training approach which has been adopted. In \cref{sec:4}, we provide illustrative results for the restoration of monochromatic and color images.
Finally, some concluding remarks are made in \cref{sec:5}.

\noindent \textbf{Notation}: Throughout the article, we will denote by $\|\cdot\|$ the norm endowing any real Hilbert space $\HH$. The same notation (being clear from the context) will be used to denote the norm of a bounded linear operator $L$ from  $\HH$ to some real Hilbert space $\GG$, that is $\| L\| = \sup_{x\in \HH\setminus\{0\}} \|Lx\|/\|x\|$. The inner product of $\HH$ associated to $\|\cdot\|$ will be denoted by $\scal{\cdot}{\cdot}$, here again without making explicit the associated space. 
Let $D$ be a subset of $\HH$ and $T\colon D \to \HH$. The operator $T$ is \emph{$\mu$-Lipschitzian} for $\mu>0$ if, for every $(x,y)\in D^2$, $\|Tx-Ty\|\leq \mu \|x-y\|$.
If $T$ is $1$-Lipschitzian, its is said to be \emph{nonexpansive}. The operator $T$ is \emph{firmly nonexpansive} if, for every $(x,y)\in D^2$, $\|Tx-Ty\|^2 \leq \scal{x-y}{Tx-Ty}$.
We denote by $2^\HH$ be the power set of $\HH$, that is the set of all subsets of $\HH$. Let $A\colon \HH \to 2^{\HH}$ be a multivariate operator, i.e., for every $x\in \HH$, $A(x)$ is a subset of $\HH$. The \emph{graph} of $A$ is defined as $\operatorname{gra}A=\{(x,u)\in \HH^2 \mid u\in Ax\}$.
The operator $A\colon \HH \to 2^{\HH}$ is \emph{monotone} if, for every $(x,u)\in \operatorname{gra}A$ and $(y,v)\in \operatorname{gra}A$, $\scal{x-y}{u-v} \geq 0$, and \emph{maximally-monotone} if \eqref{e:MMOdef} holds, for every $(x_{1},u_{1})\in \HH^2$. The resolvent of $A$ is $J_{A} = (\Id+ A)^{-1}$, where the inverse is here defined in the sense of the inversion of the graph of the operator. For further details on monotone operator theory, we refer the reader to \cite{bauschke2017convex, combettes2020fixed}.

% **********************************************************
% **********************************************************
\section{Neural network models for maximally monotone operators}
\label{sec:2}
% **********************************************************
% **********************************************************

% **********************************************************
\subsection{A property of maximally monotone operators}
% **********************************************************

We first introduce a main property that will link explicitly MMOs to nonexpansive operators, noticing that any multivalued operator on $\HH$ is fully characterized by its resolvent.

\begin{proposition}\label{prop:basmaxmon}
Let $A\colon \HH \to 2^{\HH}$. $A$ is a MMO if and only if
there exists a nonexpansive (i.e. 1-Lipschitzian) operator $Q\colon \HH \to \HH$ such
that
\begin{align}\label{e:JAQ}
J_{A}\colon \HH &\to \HH\colon
x \mapsto \frac{x+Q(x)}{2},
\end{align}
that is
\begin{equation}\label{e:AQ}
A = 2(\Id+Q)^{-1}-\Id.
\end{equation}
\end{proposition}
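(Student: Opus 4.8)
The plan is to pass through the resolvent and reduce everything to two facts: (i) a multivalued operator $A$ is maximally monotone if and only if $J_A$ is single-valued, firmly nonexpansive, and defined on all of $\HH$ --- this is Minty's theorem \cite{bauschke2017convex}; and (ii) a single-valued $T\colon\HH\to\HH$ is firmly nonexpansive if and only if $2T-\Id$ is nonexpansive. Fact (ii) I would dispatch first by a one-line computation: with $u=Tx-Ty$ and $d=x-y$, $\|(2T-\Id)x-(2T-\Id)y\|^2=\|2u-d\|^2=4\|u\|^2-4\scal{u}{d}+\|d\|^2$, which is $\le\|d\|^2$ precisely when $\|u\|^2\le\scal{u}{d}$, i.e.\ precisely when $\|Tx-Ty\|^2\le\scal{x-y}{Tx-Ty}$.

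For the forward implication, assuming $A$ is an MMO, I would first note $J_A$ is single-valued and firmly nonexpansive directly from monotonicity: if $p_i\in J_Ax_i$ then $x_i-p_i\in Ap_i$, so $\scal{p_1-p_2}{(x_1-p_1)-(x_2-p_2)}\ge0$ yields $\scal{p_1-p_2}{x_1-x_2}\ge\|p_1-p_2\|^2$; invoking Minty's theorem then gives $\dom J_A=\HH$. I would set $Q:=2J_A-\Id$: it is defined on all of $\HH$, it is nonexpansive by fact (ii), and $J_A=(\Id+Q)/2$, which is \eqref{e:JAQ}.

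For the converse, given a nonexpansive $Q\colon\HH\to\HH$, I would set $T:=(\Id+Q)/2$, so that $2T-\Id=Q$ and fact (ii) makes $T$ firmly nonexpansive with $\dom T=\HH$; then define $A:=T^{-1}-\Id$ via graph inversion. Since $\Id+A=T^{-1}$ this gives $J_A=(\Id+A)^{-1}=T$, i.e.\ \eqref{e:JAQ}, and inverting this graph relation yields \eqref{e:AQ}. It remains to show $A$ is an MMO, which I would do by hand rather than re-citing Minty: writing a generic $u\in Ap$ as $p=T(p+u)$ and setting $x:=p+u$, monotonicity of $A$ becomes $\scal{Tx_1-Tx_2}{x_1-x_2}-\|Tx_1-Tx_2\|^2\ge0$, which holds because $T$ is firmly nonexpansive; for maximality, if $(p_1,u_1)\in\HH^2$ is monotonically related to all of $\gra A$, I would test against the pair $(p_2,u_2)\in\gra A$ with $p_2:=T(p_1+u_1)$ and $u_2:=(p_1+u_1)-p_2$, observe that $u_1-u_2=-(p_1-p_2)$, so that $\scal{p_1-p_2}{u_1-u_2}\ge0$ forces $-\|p_1-p_2\|^2\ge0$, hence $p_1=p_2=T(p_1+u_1)$, i.e.\ $(p_1,u_1)\in\gra A$.

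The one genuinely deep input is Minty's surjectivity theorem, used only to obtain $\dom J_A=\HH$ in the forward direction; the rest --- the firmly-nonexpansive/nonexpansive identity and the monotonicity and maximality verifications --- is elementary Hilbert-space algebra, and the converse in particular needs no external theorem. I therefore expect the write-up to be short, the only real subtlety being to keep the multivalued-inverse bookkeeping straight when identifying $A=T^{-1}-\Id$ with the resolvent relation \eqref{e:AQ}.
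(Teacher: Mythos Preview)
Your proposal is correct and follows essentially the same route as the paper: the paper's proof simply cites Minty's theorem together with the standard fact (Proposition~23.8 in \cite{bauschke2017convex}) that an operator is firmly nonexpansive if and only if it is of the form $(\Id+Q)/2$ with $Q$ nonexpansive, and then observes that \eqref{e:AQ} is the inversion of \eqref{e:JAQ}. You unpack both ingredients explicitly --- proving the firmly-nonexpansive/nonexpansive equivalence by the polarization identity and, for the converse, verifying monotonicity and maximality of $T^{-1}-\Id$ directly rather than invoking Minty again --- but the underlying decomposition is identical.
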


\begin{proof}
This result is a direct consequence of Minty's theorem and the fact that any firmly nonexpansive operator 
can be expressed as the arithmetic mean of the identity operator and some nonexpansive operator $Q$ (see \cite[Proposition 23.8]{bauschke2017convex}).
\eqref{e:AQ}~is deduced by inverting~\eqref{e:JAQ}.
\end{proof}

The above result means that the class of MMOs can be derived from the class of nonexpansive mappings. The focus should therefore turn on how to model operators in the latter class with neural networks. 

% **********************************************************
\subsection{Nonexpansive neural networks}
% **********************************************************

Our objective will next be to derive a parametric model for the nonexpansive operator $Q$ in \eqref{e:JAQ}. Due to their oustanding approximation capabilities, neural networks appear as good choices for building such models. We will restrict our attention to feedforward NNs. 

\begin{model}   \label{mod:feedfwd}
Let $(\HH_{m})_{0 \le m \le M}$ be real Hilbert spaces
such that $\HH_{0}= \HH_{M}=\HH$. A feedforward NN having $M$ layer and both input and ouput in $\HH$ can be
seen as a composition of operators:
\begin{equation}\label{e:defNN}
 Q=T_M \cdots T_1,
\end{equation}
where 
\begin{align}\label{e:defNNlayeri}
(\forall m\in\{1,\ldots,M\})\quad 
T_m\colon\HH_{m-1}&\to\HH_m\colon
 x\mapsto R_m(W_mx+b_m).
\end{align}
At each layer $m  \in\{1,\ldots,M\}$, 
$R_{m}\colon \HH_{m} \to \HH_{m}$ is a nonlinear activation operator, \linebreak $W_{m}\colon \HH_{m-1}\to \HH_{m}$ is a bounded 
linear operator corresponding to the weights of the network, and $b_{m}\in \HH_{m}$ is a bias parameter vector.
\end{model}

In the remainder, we will use the following notation:
\begin{notation}
Let $V$ and $V'$ be nonempty subsets of some Euclidean space and let $\mathcal{N}_\mathcal{F}(V,V')$ denote the class of nonexpansive feedforward NNs with inputs in $V$ and outputs in $V'$ built from a given dictionary $\mathcal{F}$ of allowable activation operators. 
\end{notation}
Also, we will make the following assumption:
\begin{assumption}\label{as:actifun}
The identity operator as well as the sorting operator performed on blocks of size 2 belong to dictionary $\mathcal{F}$. 
\end{assumption}
In other words, a network in $\mathcal{N}_\mathcal{F}(V,V')$ can be linear, or it can be built by using max-pooling with blocksize 2 and any other kind of activation function, say some given function $\rho\colon \RR \to \RR$,  operating componentwise in some of its layers, provided that the resulting structure is 1-Lipschitzian.

The main difficulty is to design such a feedforward NN so that $Q$ in \eqref{e:defNN} has a Lipschitz constant smaller or equal to 1. 
An extensive literature has been devoted to the estimation of Lipschitz constants of NNs \cite{anil2018sorting,Scam18,Szeg13}, but the main goal was different from ours since these works were motivated by robustness issues in the presence of adversarial perturbations \cite{fazlyab2019efficient,latorre2020lipschitz,neacsu2020accuracy,Szeg13}.

In this work, we propose to enforce the nonexpansiveness of $Q$ by adding a regularization during the training of the NN as explained in \cref{sec:3}. Other conditions ensuring the firm nonexpansiveness of Model~\ref{mod:feedfwd} can be found in Appendix~\ref{apx:nonexpansive}.

% **********************************************************
\subsection{Stationary maximally monotone operators}
% **********************************************************

The one-to-one correspondence between a MMO and a nonexpansive operator from \cref{prop:basmaxmon} raises the following question: can one approximate the resolvent of a MMO as closely as desired by a firmly nonexpansive neural network structure? The remainder of this work aims to answer this question in the case of a subclass of MMOs, namely those which are stationary, as defined hereunder.

\begin{definition}\label{d:maxmonstat}
Let $(\HH_k)_{1\le k \le K}$ be real Hilbert spaces.
An operator $A$ defined on the product space space $\HH = \HH_{1}\times \cdots \times \HH_{K}$ will be said to be a \emph{stationary} MMO
if its resolvent $J_{A}$ is an operator from $\HH$ to $\HH$ such that, for every $k\in \{1,\ldots,K\}$, there exists a bounded linear operator $\Pi_{k}\colon \HH \to \HH_{k}$ and a self-adjoint nonnegative operator $\Omega_{k}\colon \HH \to \HH$
such that
\begin{equation}\label{e:stat1} 
\big(\forall (x,y) \in \HH^2\big)
\quad \|\Pi_{k}\big(2J_{A} (x)-x -2J_{A} (y)+y\big)\|^2 \le \scal{x-y}{\Omega_{k}(x-y)}
\end{equation}
with
\begin{align} 
&\sum_{k=1}^K \Pi_{k}^{*}\,\Pi_{k} = \Id  \label{e:stat21}\\
&\Big\|\sum_{k=1}^K \Omega_{k}\Big\| \le 1. \label{e:stat22}
\end{align}
\end{definition}

Immediate consequences of this definition are given below.
In particular, we will see that stationary MMOs define a subclass of the set of MMOs. 
\begin{proposition}\label{prop:immSMMO}
Let $(\HH_k)_{1\le k \le K}$ be real Hilbert spaces and
let $\HH = \HH_{1}\times \cdots \times \HH_{K}$.
Let $A\colon \HH\to 2^{\HH}$.
\begin{enumerate}
    \item \label{prop:immSMMOi}  If $A$ is a stationary MMO on $\HH$, then it is maximally monotone.
    \item \label{prop:immSMMOii} Assume that \eqref{e:stat21} is satisfied  where,
    for every $k\in \{1,\ldots,K\}$, $\Pi_{k}\colon \HH \to \HH_{k}$  is a bounded linear operator.
    If $\ran(A+\Id)=\HH$ and
    \begin{equation} 
    (\forall (p,q)\in \HH^2)(\forall p'\in A(p))(\forall q'\in A(q))\;\;\scal{\Pi_k (p-q)}{\Pi_k (p'-q')} \ge 0,
    \label{e:monAk}
    \end{equation}
    then $A$ is a stationary MMO.
\end{enumerate}
\end{proposition}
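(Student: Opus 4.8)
The plan is to establish each part by translating the defining condition \eqref{e:stat1}--\eqref{e:stat22} into the language of the resolvent $J_A$ and then using \cref{prop:basmaxmon} together with Minty's theorem.

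For part \eqref{prop:immSMMOi}, I would first set $Q = 2J_A - \Id$, which is the natural candidate for the nonexpansive operator attached to $J_A$ by \cref{prop:basmaxmon}; proving $A$ is maximally monotone amounts to proving $Q$ is nonexpansive (and that $J_A$ has full domain, which is implicit in the statement that $J_A$ is an operator from $\HH$ to $\HH$). So fix $(x,y)\in\HH^2$ and write $d = Q(x)-Q(y) = 2J_A(x)-x-2J_A(y)+y$. By \eqref{e:stat21} we have $\|d\|^2 = \sum_{k=1}^K \|\Pi_k d\|^2$, since $\sum_k \Pi_k^*\Pi_k = \Id$ gives $\|d\|^2 = \scal{d}{\sum_k \Pi_k^*\Pi_k d} = \sum_k \|\Pi_k d\|^2$. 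Applying the stationarity bound \eqref{e:stat1} termwise, $\|\Pi_k d\|^2 \le \scal{x-y}{\Omega_k(x-y)}$, and summing over $k$ yields $\|d\|^2 \le \scal{x-y}{\big(\sum_k \Omega_k\big)(x-y)} \le \big\|\sum_k \Omega_k\big\|\,\|x-y\|^2 \le \|x-y\|^2$ by \eqref{e:stat22}. Hence $Q$ is nonexpansive, and $J_A = (\Id+Q)/2$ is firmly nonexpansive; by \cref{prop:basmaxmon} (equivalently, by Minty plus \cite[Proposition 23.8]{bauschke2017convex}), $A = 2(\Id+Q)^{-1}-\Id$ is a MMO.

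For part \eqref{prop:immSMMOii}, the hypothesis $\ran(A+\Id)=\HH$ means $J_A = (\Id+A)^{-1}$ has full domain $\HH$; one should also check it is single-valued, which follows once we derive the monotonicity-type inequality below (or can be obtained directly from \eqref{e:monAk} with $\Pi_k$ summed). Given $(x,y)\in\HH^2$, write $p = J_A(x)$, $q = J_A(y)$, so that $p' := x-p \in A(p)$ and $q' := y-q\in A(q)$. Then $2J_A(x)-x - 2J_A(y)+y = (p - p') - (q - q') = (p-q) - (p'-q')$, so for each $k$,
\[
\|\Pi_k\big(2J_A(x)-x-2J_A(y)+y\big)\|^2 = \|\Pi_k(p-q)\|^2 - 2\scal{\Pi_k(p-q)}{\Pi_k(p'-q')} + \|\Pi_k(p'-q')\|^2.
\]
By \eqref{e:monAk} the cross term is $\le 0$, hence this is $\le \|\Pi_k(p-q)\|^2 + \|\Pi_k(p'-q')\|^2$. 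Now I take $\Omega_k = \Pi_k^*\Pi_k + \Pi_k^*\Pi_k = 2\Pi_k^*\Pi_k$? — here I need to be slightly careful: summing the last display over $k$ and using \eqref{e:stat21} gives the bound $\|x-y - (2J_A(x)-x-2J_A(y)+y)\|$-type quantities... more precisely, $\sum_k \|\Pi_k(p-q)\|^2 = \|p-q\|^2$ and $\sum_k\|\Pi_k(p'-q')\|^2 = \|p'-q'\|^2$, and since $(p-q)+(p'-q') = x-y$, the parallelogram-type identity gives $\|p-q\|^2 + \|p'-q'\|^2 \le \|x-y\|^2$ exactly when $\scal{p-q}{p'-q'}\ge 0$, which is the $k$-summed version of \eqref{e:monAk}. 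So the correct choice is simply $\Omega_k = \Pi_k^*\Pi_k$, giving $\sum_k\Omega_k = \Id$ so \eqref{e:stat22} holds with equality, and \eqref{e:stat1} follows from the displayed inequality once one absorbs $\|\Pi_k(p'-q')\|^2$ — which requires showing $\|\Pi_k(p-q)\|^2 + \|\Pi_k(p'-q')\|^2 \le \scal{x-y}{\Pi_k^*\Pi_k(x-y)} = \|\Pi_k(x-y)\|^2$, i.e. $\|\Pi_k(p-q) + \Pi_k(p'-q')\|^2 \ge \|\Pi_k(p-q)\|^2 + \|\Pi_k(p'-q')\|^2$, equivalently $\scal{\Pi_k(p-q)}{\Pi_k(p'-q')}\ge 0$, which is precisely \eqref{e:monAk}. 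This closes the argument, and the maximal monotonicity of $A$ then follows from part \eqref{prop:immSMMOi}.

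The main obstacle is the bookkeeping in part \eqref{prop:immSMMOii}: one must pick the operators $\Omega_k$ consistently with \emph{both} \eqref{e:stat1} and \eqref{e:stat22}, and verify that \eqref{e:monAk} (a blockwise monotonicity of $A$ through the projections $\Pi_k$) is exactly strong enough to control the left-hand side of \eqref{e:stat1} by $\|\Pi_k(x-y)\|^2$. The identities $2J_A(x)-x = J_A(x) - (x - J_A(x))$ with $x-J_A(x)\in A(J_A(x))$, and $\sum_k\Pi_k^*\Pi_k = \Id$, are the two facts doing all the work, and the computation is purely algebraic once they are in place; a minor additional point is confirming single-valuedness of $J_A$ so that \eqref{e:stat1} is well posed.
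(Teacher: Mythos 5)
Your proposal is correct and follows essentially the same route as the paper: part \ref{prop:immSMMOi} by summing \eqref{e:stat1} over $k$ and invoking \eqref{e:stat21}--\eqref{e:stat22} to get nonexpansiveness of $2J_A-\Id$, and part \ref{prop:immSMMOii} by setting $p=J_A(x)$, $q=J_A(y)$ with $x-p\in A(p)$, $y-q\in A(q)$, using the summed version of \eqref{e:monAk} for single-valuedness, and taking $\Omega_k=\Pi_k^*\Pi_k$ so that the blockwise inequality \eqref{e:stat1} follows from \eqref{e:monAk} (your expansion of the squares is just a rewriting of the paper's inequality \eqref{e:firmJAk}). The brief hesitation about $\Omega_k=2\Pi_k^*\Pi_k$ is resolved correctly, so no gap remains.
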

\begin{proof}

\noindent\ref{prop:immSMMOi}: 
Let $A$ be a stationary MMO defined on $\HH$. Summing over $k$ in
\eqref{e:stat1} yields, for every $(x,y)\in \HH^2$,
\begin{multline}
    \scal{2J_{A} (x)-x -2J_{A} (y)+y}{\Big(\sum_{k=1}^K\Pi_k^*\,\Pi_k\Big)
    (2J_{A} (x)-x -2J_{A} (y)+y)}\\
    \le \scal{x-y}{\sum_{k=1}^K\Omega_{k}(x-y)}.
\end{multline}
It thus follows from
\eqref{e:stat21}, \eqref{e:stat22}, and the nonnegativity of $(\Omega_k)_{1\le k\le K}$ that
\begin{equation}
    \|2J_{A} (x)-x -2J_{A} (y)+y\|^2
    \le \Big\|\sum_{k=1}^K\Omega_{k}\Big\| \|x-y\|^2 \le \|x-y\|^2.
\end{equation}
This shows that $2 J_A-\Id$ is a nonexpansive operator. Hence, based on Proposition~\ref{p:propnonexpan}, $A$ is an MMO.

\noindent\ref{prop:immSMMOii}: 
Let $k$ be an arbitrary integer in $\{1,\ldots,K\}$. \eqref{e:monAk} can be reexpressed as
    \begin{multline} \label{e:monAkbis}
    (\forall (p,q)\in \HH^2)(\forall p'\in A(p))(\forall q'\in A(q))\\
    \scal{\Pi_k^*\Pi_k (p-q)}{p'-q'+p-q} \ge \scal{\Pi_k^*\Pi_k (p-q)}{p-q}.
    \end{multline}
    In particular, this inequality holds if $p\in J_A(x)$ and $q\in J_A(y)$ where $x$ and $y$ are arbitrary elements of $\HH$. Then, by definition of $J_A$, we have $x-p\in A(p)$, $y-q\in A(q)$, and \eqref{e:monAkbis} yields
    \begin{equation}\label{e:firmJAk}
    \scal{\Pi_k^*\Pi_k (p-q)}{x-y} \ge \scal{\Pi_k^*\Pi_k (p-q)}{p-q}.
    \end{equation}
    By summing over $k$ and using \eqref{e:stat21}, it follows that 
    $J_A$ is firmly nonexpansive and it is thus single valued.
    \eqref{e:firmJAk} is then equivalent to
    \begin{equation}
    \|\Pi_{k}\big(2J_{A} (x)-x -2J_{A} (y)+y\big)\|^2 \le \scal{x-y}{\Pi_k^* \Pi_k(x-y)}.
    \end{equation}
    This shows that
    Inequality \eqref{e:stat1} holds with $\Omega_k=\Pi_k^*\Pi_k$.
    Since \eqref{e:stat22} is then obviously satisfied, $A$ is a stationary MMO.
\end{proof}

In order to demonstrate the versatility of stationary MMOs, 
we feature a few examples of such operators. The validity of these examples is proved in Appendix~\ref{apx:statMMO}.

\begin{example}\label{ex:stat:separabilityter}
For every $k\in \{1,\ldots,K\}$, let
$B_{k}$ be an MMO defined on a real Hilbert space $\HH_{k}$ and let $B$ be the operator defined as
\begin{equation}\label{e:sepAter}
(\forall x = (x^{(k)})_{1\le k \le K} \in \HH = \HH_{1}\times \cdots \times \HH_{K}) \quad 
B(x) = B_{1}(x^{(1)}) \times \cdots \times B_{K}(x^{(K)}).
\end{equation}
Let $U \colon \HH \to \HH$ be a unitary linear operator. Then
$A = U^*B U$
is a stationary MMO.
\end{example}

\begin{example}\label{ex:stat:separabilityf}
For every $k\in \{1,\ldots,K\}$, let $\varphi_k\in \Gamma_0(\RR)$, 
and let the function $g$ be defined as 
\begin{equation}\label{e:sepAf}
(\forall x = (x^{(k)})_{1\le k \le K} \in \RR^K)
\quad 
g(x) = \sum_{k=1}^K \varphi_k(x^{(k)}).
\end{equation}
Let $U \in \RR^{K\times K}$ be an orthogonal matrix.
Then the subdifferential of $g\circ U$ is  a stationary MMO.
\end{example}

\begin{example}\label{ex:stat:linearity}
Let $(\HH_{k})_{1\le k \le K}$ be real Hilbert spaces and let
$B$ be a bounded linear operator from $\HH = \HH_{1}\times \cdots \times \HH_{K}$
to $\HH$ such that one of the following conditions holds:
\begin{enumerate}
\item $B+B^*$ is nonnegative
\item $B$ is skewed
\item $B$ is cocoercive.
\end{enumerate}
Let $c\in \HH$.
Then the affine operator $A\colon \HH\to \HH\colon x \mapsto Bx+c$ is a stationary MMO.
\end{example}

\begin{example}\label{ex:stat:inverse}
Let $(\HH_k)_{1\le k \le K}$ be real Hilbert spaces, let
$\HH = \HH_{1}\times \cdots \times \HH_{K}$, and let
$A\colon \HH \to 2^{\HH}$ be a stationary MMO. Then its inverse $A^{-1}$ is a stationary MMO.
\end{example}

\begin{example}\label{ex:stat:lin}
Let $(\HH_k)_{1\le k \le K}$ be real Hilbert spaces, let
$\HH = \HH_{1}\times \cdots \times \HH_{K}$, and let
$A\colon \HH \to 2^{\HH}$ be a stationary MMO. Then, for every $\rho \in \RR\setminus\{0\}$, $\rho A(\cdot/\rho)$ is a stationary MMO.
\end{example}

% **********************************************************
\subsection{Universal approximation theorem}
% **********************************************************

In this section we provide one of the main contributions of this article, consisting in a universal approximation theorem for stationary MMOs defined on $\HH = \RR^K$. 
To this aim, we first need to introduce useful results, starting by recalling the definition of a lattice.
\begin{definition}
A set $\mathcal{L}_E$ of functions from a set $E$ to $\RR$ is said to be a \emph{lattice} if, for every $(h^{(1)},h^{(2)})\in \mathcal{L}_E^2$,
$\min\{h^{(1)},h^{(2)}\}$ and $\max\{h^{(1)},h^{(2)}\}$ belong to $\mathcal{L}_E$. A sub-lattice of $\mathcal{L}_E$ is a lattice included in $\mathcal{L}_E$.
\end{definition}

This notion of lattice is essential in the variant of the Stone-Weierstrass theorem provided below.

\begin{proposition}{\rm \cite{anil2018sorting}} \label{prop:SW}
Let $(E,d)$ be a compact metric space with at least two distinct points. Let $\mathcal{L}_E$ be a sub-lattice of $\mathrm{Lip}_{1}(E,\RR)$, the class of 1-Lipschtzian (i.e. nonexpansive) functions from $E$ to $\RR$. 
Assume that, for every $(u,v)\in E^2$ with $u\neq v$ and, for every $(\zeta,\eta) \in \RR^{2}$
such that $|\zeta-\eta| \le d(u,v)$, there exists a function $h\in \mathcal{L}_E$ such that $h(u) = \zeta$
and $h(v) = \eta$. Then $\mathcal{L}_E$  is dense in  $\mathrm{Lip}_{1}(E,\RR)$ for the uniform norm.
\end{proposition}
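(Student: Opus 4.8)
The statement to prove is Proposition~\ref{prop:SW}, a lattice-based variant of the Stone--Weierstrass theorem restricted to nonexpansive functions. Since the proposition is attributed to \cite{anil2018sorting}, I would reconstruct the classical lattice proof of Stone--Weierstrass, being careful to verify that the approximating function one builds stays inside $\mathrm{Lip}_1(E,\RR)$ rather than merely being continuous.

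\textbf{Plan.} The plan is to fix $f \in \mathrm{Lip}_1(E,\RR)$ and $\varepsilon > 0$, and construct $h \in \mathcal{L}_E$ with $\|h-f\|_\infty \le \varepsilon$ by the standard two-stage argument. First I would handle pairs of points: for each $(u,v) \in E^2$ (including $u=v$, trivially), since $f$ is nonexpansive we have $|f(u)-f(v)| \le d(u,v)$, so the hypothesis supplies $h_{u,v} \in \mathcal{L}_E$ with $h_{u,v}(u) = f(u)$ and $h_{u,v}(v) = f(v)$. Second, I would fix $u$ and let $v$ range: by continuity of $h_{u,v}$ and $f$, the set $O_{u,v} = \{w \in E \mid h_{u,v}(w) < f(w) + \varepsilon\}$ is open and contains both $u$ and $v$; compactness of $E$ gives a finite subcover $O_{u,v_1},\ldots,O_{u,v_n}$, and I set $h_u = \min_j h_{u,v_j}$, which lies in $\mathcal{L}_E$ because $\mathcal{L}_E$ is a lattice (closed under finite min). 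By construction $h_u < f + \varepsilon$ everywhere and $h_u(u) = f(u)$. Third, I would let $u$ range: the set $U_u = \{w \in E \mid h_u(w) > f(w) - \varepsilon\}$ is open and contains $u$; compactness gives a finite subcover $U_{u_1},\ldots,U_{u_m}$, and I set $h = \max_i h_{u_i} \in \mathcal{L}_E$. Then $h > f - \varepsilon$ everywhere (each point is in some $U_{u_i}$) and $h < f + \varepsilon$ everywhere (each $h_{u_i} < f + \varepsilon$, and max preserves this), so $\|h - f\|_\infty \le \varepsilon$, proving density.

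\textbf{Main obstacle.} The one genuine subtlety — and the only place the restriction to $\mathrm{Lip}_1$ rather than general continuity matters — is making sure the finite minima and maxima built above actually belong to $\mathcal{L}_E$. This is exactly why the hypothesis on $\mathcal{L}_E$ is that it be a \emph{sub-lattice} of $\mathrm{Lip}_1(E,\RR)$: finite pointwise $\min$ and $\max$ of $1$-Lipschitz functions are again $1$-Lipschitz (for two functions, $|\min\{a_1,b_1\} - \min\{a_2,b_2\}| \le \max\{|a_1-a_2|,|b_1-b_2|\}$, and similarly for $\max$), so the lattice operations keep us inside $\mathrm{Lip}_1(E,\RR)$, and the sub-lattice assumption keeps us inside $\mathcal{L}_E$ itself. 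The role of ``at least two distinct points'' is merely to make the metric nondegenerate so the interpolation hypothesis is nonvacuous. I would also note that the interpolation hypothesis is used \emph{only} at the two-point stage; everything afterward is soft topology (openness of strict-inequality sets via continuity, plus compactness) together with the lattice closure. No estimate here is more than a line, so I would not belabor them.

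\textbf{Remark on scope.} The argument shows $\mathcal{L}_E$ is dense in $\mathrm{Lip}_1(E,\RR)$ for $\|\cdot\|_\infty$; it does not claim the closure equals $\mathrm{Lip}_1(E,\RR)$ unless $\mathcal{L}_E$ is additionally closed, which is not asserted nor needed. This density is precisely the form required downstream to approximate coordinates of the nonexpansive operator $Q$ in \eqref{e:JAQ} by nonexpansive networks.
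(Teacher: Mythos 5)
The paper does not prove Proposition~\ref{prop:SW}: it is quoted from \cite{anil2018sorting}, so there is no internal proof to compare against. Your reconstruction is the classical lattice (Kakutani--Krein) form of the Stone--Weierstrass argument---two-point interpolation, then a pointwise minimum over a finite subcover to get $h_u< f+\varepsilon$ with $h_u(u)=f(u)$, then a pointwise maximum over a second finite subcover---and it is correct; as you note, the sub-lattice hypothesis is exactly what keeps the construction inside $\mathcal{L}_E\subset\mathrm{Lip}_{1}(E,\RR)$, and this is essentially the proof given in the cited reference. One small slip: the interpolation hypothesis is stated only for $u\neq v$, so your parenthetical ``including $u=v$, trivially'' is not literally licensed by it; but that case is also not needed, since for fixed $u$ the open sets $O_{u,v}$ with $v\neq u$ already cover $E$ (each contains $u$), and at least one such $v$ exists because $E$ has two distinct points.
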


This allows us to derive the following approximation result that will be instrumental to prove our main result. 
\begin{corollary}\label{co:SW}
Let $V$ be a subspace of $\RR^K$ and let $h\in \mathrm{Lip}_{1}(V,\RR)$. Let $E$ be a compact subset
of $V$. Then, for every $\varepsilon \in \RPP$, there exists $h_{\varepsilon}\in \mathcal{N}_{\mathcal{F}}(V,\RR)$, where $\mathcal{F}$
is any dictionary of activation function satisfying \cref{as:actifun},
such that
\begin{equation}\label{e:approxSW}
(\forall x \in E)\quad |h(x)-h_{\varepsilon}(x)| \le \varepsilon.
\end{equation}
\end{corollary}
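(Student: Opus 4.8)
The plan is to deduce Corollary~\ref{co:SW} from the Stone--Weierstrass-type result in Proposition~\ref{prop:SW} by checking its hypotheses for a suitably chosen sub-lattice of $\mathrm{Lip}_1(E,\RR)$. First I would dispose of the degenerate case $E=\emptyset$ or $E$ a singleton, where the statement is trivial (any constant, or any point value, is realized by a one-layer affine network). So assume $E$ has at least two points; equipped with the Euclidean metric inherited from $\RR^K$, $E$ is then a compact metric space with at least two distinct points, as required.

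Next I would take $\mathcal{L}_E$ to be the restriction to $E$ of the functions in $\mathcal{N}_{\mathcal{F}}(V,\RR)$. The key verifications are: (a) $\mathcal{L}_E \subseteq \mathrm{Lip}_1(E,\RR)$ --- this is immediate since every network in $\mathcal{N}_{\mathcal{F}}(V,\RR)$ is nonexpansive by definition, and restriction to a subset preserves the Lipschitz bound; (b) $\mathcal{L}_E$ is a lattice --- given two such networks $h^{(1)},h^{(2)}$, the maps $\min\{h^{(1)},h^{(2)}\}$ and $\max\{h^{(1)},h^{(2)}\}$ can be written using the blocksize-2 sorting operator on the pair $(h^{(1)}(x),h^{(2)}(x))$, which is available in $\mathcal{F}$ by \cref{as:actifun} and is nonexpansive (as a permutation-of-coordinates operation on a $2$-block), so these lattice operations stay inside $\mathcal{N}_{\mathcal{F}}(V,\RR)$; one just has to note that concatenating the two sub-networks and feeding their outputs into the sorting layer keeps the overall Lipschitz constant at most $1$, using that if $x\mapsto(h^{(1)}(x),h^{(2)}(x))$ were $\sqrt{2}$-Lipschitzian this would fail --- but in fact for nonexpansive scalar-valued $h^{(1)},h^{(2)}$ the pair map is nonexpansive into $(\RR^2,\|\cdot\|_\infty)$ rather than $\ell_2$, so a little care on which norm the sorting layer sees is needed (one may instead argue directly that $\min$ and $\max$ of two $1$-Lipschitz functions are $1$-Lipschitz and that these operations are exactly what the sorting activation computes, hence realizable). (c) the interpolation condition: for $u\neq v$ in $E$ and $\zeta,\eta\in\RR$ with $|\zeta-\eta|\le \|u-v\|$, I must exhibit $h\in\mathcal{L}_E$ with $h(u)=\zeta$, $h(v)=\eta$.

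For (c) the natural candidate is an affine function: set $w=(v-u)/\|v-u\|^2$ so that $\pair{w}{v-u}=1$, and let $h(x)=\zeta+(\eta-\zeta)\pair{w}{x-u}$. Then $h(u)=\zeta$ and $h(v)=\eta$, and $h$ has Lipschitz constant $|\eta-\zeta|/\|u-v\|\le 1$ by hypothesis. Such an affine map is a one-layer network with identity activation (in $\mathcal{F}$ by \cref{as:actifun}), hence lies in $\mathcal{N}_{\mathcal{F}}(V,\RR)$ and its restriction is in $\mathcal{L}_E$. Proposition~\ref{prop:SW} then gives that $\mathcal{L}_E$ is dense in $\mathrm{Lip}_1(E,\RR)$ for the uniform norm; applying this to $h|_E$ (which is in $\mathrm{Lip}_1(E,\RR)$ since $h\in\mathrm{Lip}_1(V,\RR)$) produces, for each $\varepsilon>0$, a network $h_\varepsilon\in\mathcal{N}_{\mathcal{F}}(V,\RR)$ with $\sup_{x\in E}|h(x)-h_\varepsilon(x)|\le\varepsilon$, which is \eqref{e:approxSW}.

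The main obstacle I anticipate is the bookkeeping in step (b): making sure that the lattice operations $\min$ and $\max$ really are implementable by networks in $\mathcal{N}_{\mathcal{F}}(V,\RR)$ \emph{while preserving the global nonexpansiveness constraint}, since naively stacking two nonexpansive sub-networks in parallel and then sorting could inflate the Lipschitz constant if one measures the parallel output in the Euclidean norm. The resolution is that $\min\{h^{(1)},h^{(2)}\}$ and $\max\{h^{(1)},h^{(2)}\}$ are themselves $1$-Lipschitz whenever $h^{(1)},h^{(2)}$ are, and the blocksize-2 sorting activation computes exactly the pair $(\min,\max)$; so one should phrase the argument around this identity rather than around a generic composition bound. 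Everything else --- the degenerate cases, the affine interpolant, and the final invocation of Proposition~\ref{prop:SW} --- is routine.
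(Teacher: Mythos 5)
Your proposal is correct and follows essentially the same route as the paper's proof: restrict $\mathcal{N}_{\mathcal{F}}(V,\RR)$ to $E$, verify the lattice property via the blocksize-2 sorting activation (noting, as the paper does, that $\min$ and $\max$ of $1$-Lipschitz functions are $1$-Lipschitz so the assembled structure stays nonexpansive), realize the interpolation condition with a linear network of weight norm $|\zeta-\eta|/\|u-v\|\le 1$, invoke \cref{prop:SW}, and treat the degenerate case separately. The concern you raise about the parallel-then-sort composition is resolved exactly as in the paper, by checking nonexpansiveness of the final scalar-valued map rather than bounding layer norms.
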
 
\begin{proof}
First note that $\mathcal{N}_{\mathcal{F}}(V,\RR)$ is a lattice. Indeed, if $h^{(1)}\colon V\to \RR$ and $h^{(2)}\colon V\to \RR$ are 1-Lipschitzian, then $\min\{h^{(1)},h^{(2)}\}$ and $\max\{h^{(1)},h^{(2)}\}$ are 1-Lipschitzian.
In addition, if $h^{(1)}$ and $h^{(2)}$ are elements in $\mathcal{N}_{\mathcal{F}}(V,\RR)$, then by applying sorting operations on the two outputs of these two networks, $\min\{h^{(1)},h^{(2)}\}$ and $\max\{h^{(1)},h^{(2)}\}$ are generated.
Each of these outputs can be further selected by applying weight matrices either equal to $[1\;\;0]$ or $[0\;\;1]$ as a last operation, so leading to a NN in $\mathcal{N}_{\mathcal{F}}(V,\RR)$. 

Let $E$ be a compact subset of $V$. Assume that $E$ has at least two distinct points. Since $\mathcal{N}_{\mathcal{F}}(V,\RR)$ is a lattice, the set of restrictions to $E$ of elements in $\mathcal{N}_{\mathcal{F}}(V,\RR)$ is a sub-lattice $\mathcal{L}_E$ of $\mathrm{Lip}_{1}(E,\RR)$. 
In addition, let $(u,v)\in E^{2}$ with $u\neq v$ and let $(\zeta,\eta) \in \RR^{2}$ be such that $|\zeta-\eta| \le \|u-v\|$. Set $h\colon V \to \RR \colon x \mapsto w^\top (x-v) + \eta$ where $w=(\zeta-\eta)(u-v)/\|u-v\|^{2}$. 
Since $\|w\| = |\zeta-\eta|/\|u-v\| \le 1$, $h$ is a linear network in $\mathcal{N}_{\mathcal{F}}(V,\RR)$ and we have $h(u) = \zeta$ and $h(v)=\eta$. This shows that the restriction of $h$ to $E$ is an element of $\mathcal{L}_E$ satisfying the assumptions of \cref{prop:SW}. It can thus be deduced from this proposition
that \eqref{e:approxSW} holds.

The inequality also trivially holds if $E$ reduces to a single point $x$ since it is always possible to find a linear network
in $\mathcal{N}_{\mathcal{F}}(V,\RR)$ whose output equals $h(x)$.
\end{proof}

\begin{remark}
This result is valid whatever the norm used on $V$.
\end{remark}

We are now able to state a universal approximation theorem for MMOs defined on $\HH = \RR^K$ (i.e., for every $k\in \{1,\ldots,K\}$, $\HH_{k}=\RR$ in \cref{d:maxmonstat}).
\begin{theorem}\label{t:approxmon}
Let $\HH=\RR^K$. Let $A\colon \HH \to 2^{\HH}$ be a stationary MMO.
For every compact set $S \subset \HH$ and every $\epsilon \in \RPP$, there exists a NN $Q_{\epsilon}\in \mathcal{N}_{\mathcal{F}}(\HH,\HH)$, where $\mathcal{F}$ is any dictionary of activation function satisfying \cref{as:actifun}, such that $A_{\epsilon} = 2(\Id+Q_{\epsilon}\big)^{-1}-\Id$ satisfies the following properties.
\begin{enumerate}
\item\label{t:approxmoni} For every $x \in S$, $\|J_{A}(x)-J_{A_{\epsilon}}(x)\| \le \epsilon$.
\item\label{t:approxmonii} Let $x \in \HH$ and let $y \in A(x)$ be such that $x+y\in S$. Then, there exists $x_{\epsilon}\in \HH$ and $y_{\epsilon} \in A_{\epsilon}(x_{\epsilon})$ such that $\|x-x_{\epsilon}\| \le \epsilon$ and $\|y-y_{\epsilon}\| \le \epsilon$.
\end{enumerate}
\end{theorem}

\begin{proof}
\noindent\ref{t:approxmoni}: If $A\colon \RR^K \to 2^{\RR^{K}}$ is a stationary MMO then it follows from Propositions~\ref{prop:basmaxmon} and \ref{prop:immSMMO}\ref{prop:immSMMOi} that there exists a nonexpansive operator $Q\colon \RR^K \to \RR^{K}$ such that $J_{A}= (\Id+Q)/2$. In addition, according to \cref{d:maxmonstat}, there exist vectors $(p_{k})_{1\le k \le K}$ in $\RR^K$ such that, for every $k \in \{1,\ldots,K\}$,
\begin{equation}\label{e:pQLip}
\big(\forall (x,y) \in \HH^2\big)
\quad |\scal{p_{k}}{Q(x)-Q(y)}|^2 \le \scal{x-y}{\Omega_{k}(x-y)}
\end{equation}
where 
\begin{equation}\label{e:condpk}
\sum_{k=1}^{K} p_{k} p_{k}^\top = \Id
\end{equation}
and $(\Omega_{k})_{1\le k \le K}$ are positive semidefinite matrices in $\RR^{K\times K}$ satisfying \eqref{e:stat22}. 

Set $k\in \{1,\ldots,K\}$ and define $h_{k}\colon x \mapsto \scal{p_{k}}{Q(x)}$. Let $V_{k}$ be the nullspace of $\Omega_{k}$ and let $V_{k}^\perp$ be its orthogonal space. We distinguish the cases when $V_{k}^\perp \neq \{0\}$ and when $V_{k}^\perp = \{0\}$. Assume that $V_{k}^\perp \neq \{0\}$. It follows from \eqref{e:pQLip} that, for every $x\in V_{k}^\perp$ and $(y,z) \in V_{k}^2$, 
\begin{equation}\label{e:hkred}
 h_{k}(x+y) = h_{k}(x+z) = \widetilde{h}_{k}(x)
 \end{equation}
 where $\widetilde{h}_{k}\colon V_{k}^\perp \to \RR$ is such that
 \begin{equation}\label{e:Liphtk}
\big(\forall (x,x') \in (V_{k}^\perp)^2\big)
\quad |\widetilde{h}_{k}(x)-\widetilde{h}_{k}(x')| \le \|x-x'\|_{\Omega_k}
\end{equation}
and $(\forall x \in \RR^{K})$ $\|x\|_{\Omega_k} = \scal{x}{\Omega_{k}x}^{1/2}$. $\|\cdot\|_{\Omega_k}$ defines a norm on $V_{k}^\perp$. Inequality \eqref{e:Liphtk} shows  that $\widetilde{h}_{k}$ is 1-Lipschitzian on $V_{k}^\perp$ equipped with this norm. Let $S$ be a compact subset of $\RR^K$ and let $\operatorname{proj}_{V_{k}^\perp}$ be the orthogonal projection onto $V_{k}^\perp$.
$E_{k} = \operatorname{proj}_{V_{k}^\perp}(S)$ is a compact set and, in view of \cref{co:SW}, for every $\epsilon \in \RR$, there exists $\widetilde{h}_{k,\epsilon}\in \mathcal{N}_{\mathcal{F}}(V_{k}^\perp,\RR)$ such that 
\begin{equation}\label{e:approxSWk}
(\forall x \in E_{k})\quad |\widetilde{h}_{k}(x)-\widetilde{h}_{k,\epsilon}(x)| \le \frac{2\epsilon}{\sqrt{K}}.
\end{equation}
Set now $h_{k,\epsilon}= \widetilde{h}_{k,\epsilon}\circ \operatorname{proj}_{V_{k}^\perp}$. According to \eqref{e:hkred} and \eqref{e:approxSWk}, we have
\begin{align}\label{e:approxhkhkeps}
(\forall x \in S)\quad &|h_{k}(x) - h_{k,\epsilon}(x)|\nonumber\\
&=
|h_{k}(\operatorname{proj}_{V_{k}}(x)+\operatorname{proj}_{V_{k}^\perp}(x)) - h_{k,\epsilon}(\operatorname{proj}_{V_{k}}(x)+\operatorname{proj}_{V_{k}^\perp}(x))|\nonumber\\
&=|\widetilde{h}_{k}(\operatorname{proj}_{V_{k}^\perp}(x)) - \widetilde{h}_{k,\epsilon}(\operatorname{proj}_{V_{k}^\perp}(x)) |\nonumber\\  
&\le \frac{2\epsilon}{\sqrt{K}}.
\end{align}
In addition, by using the Lipschitz property of $\widetilde{h}_{k,\epsilon}$ with respect to norm $\|\cdot\|_{\Omega_k}$, for every $(x,x')\in \RR^K$,
\begin{align}\label{e:Liphkeps}
&\big(h_{k,\epsilon}(x) - h_{k,\epsilon}(x')\big)^2\nonumber\\
&= \big(\widetilde{h}_{k,\epsilon}(\operatorname{proj}_{V_{k}^\perp}(x))-\widetilde{h}_{k,\epsilon}(\operatorname{proj}_{V_{k}^\perp}(x'))\big)^2\nonumber\\
& \le \|\operatorname{proj}_{V_{k}^\perp}(x) - \operatorname{proj}_{V_{k}^\perp}(x')\|_{\Omega_{k}}^2\nonumber\\
&= \scal{\operatorname{proj}_{V_{k}^\perp}(x-x')}{\Omega_{k}\operatorname{proj}_{V_{k}^\perp}(x-x')}\nonumber\\
&= \scal{\Omega_{k}^{1/2}\operatorname{proj}_{V_{k}^\perp}(x-x')}{\Omega_{k}^{1/2}\operatorname{proj}_{V_{k}^\perp}(x-x')}\nonumber\\
& = \scal{x-x'}{\Omega_{k}(x-x')}.
\end{align}
If $V_{k}^\perp = \{0\}$, then it follows from \eqref{e:pQLip} that $h_{k}=0$. Therefore, \eqref{e:approxhkhkeps} holds with $h_{k,\epsilon}=0$, which belongs to  $\mathcal{N}_{\mathcal{F}}(V_{k}^\perp,\RR)$ and obviously satisfies \eqref{e:Liphkeps}.

Condition \eqref{e:condpk} means that $(p_{k})_{1\le k \le K}$ is an orthornormal basis of $\RR^{K}$ in the standard Euclidean metric. This implies that
\begin{equation}
(\forall x \in \RR^K) \quad Q(x) = \sum_{k=1}^{K} h_{k}(x)\,p_{k}.
\end{equation}
Set 
\begin{equation}
(\forall x \in \RR^K) \quad Q_{\epsilon}(x) = \sum_{k=1}^{K} h_{k,\epsilon}(x)\,p_{k}.
\end{equation}
It follows from \eqref{e:Liphkeps} and \eqref{e:stat22} that, for every $(x,x')\in (\RR^{K})^{2}$,
\begin{align}
&\|Q_{\epsilon}(x) -Q_{\epsilon}(x')\|^{2}\nonumber\\
& = \sum_{k=1}^{K} \big(h_{k,\epsilon}(x) - h_{k,\epsilon}(x')\big)^2\nonumber\\
&\le  \sum_{k=1}^{K} \scal{x-x'}{\Omega_{k}(x-x')}\nonumber\\
& \le \|x-x'\|^{2},
\end{align}
which shows that $Q_{\epsilon}\in \mathrm{Lip}_{1}(\RR^{K},\RR^{K})$. In addition since, for every $x\in \RR^{K}$, 
\begin{equation}
Q_{\epsilon}(x) = W [h_{1,\epsilon}(x),\ldots,h_{K,\epsilon}(x)]^\top
\end{equation}
with $W = [p_1,\ldots,p_{K}]$ and, for every $k\in \NN$, $h_{k,\epsilon} \in \mathcal{N}_{\mathcal{F}}(\RR^{K},\RR)$, $Q_{\epsilon}$ belongs to $\mathcal{N}_{\mathcal{F}}(\RR^{K},\RR^{K})$. Let $A_{\epsilon} = 2(\Id+Q_{\epsilon}\big)^{-1}-\Id$. We finally deduce from \eqref{e:approxhkhkeps} that, for every $x\in S$,
\begin{align}
&\|J_{A}(x)-J_{A_{\epsilon}}(x)\|^{2}\nonumber\\
&= \Big\|\frac{x+Q(x)}{2}-\frac{x+Q_{\epsilon}(x)}{2}\Big\|^{2}\nonumber\\
&= \frac{1}{4} \sum_{k=1}^{K} \big(h_{k}(x)-h_{k,\epsilon}(x)\big)^{2} \le \epsilon^{2}.
\end{align}

\noindent\ref{t:approxmonii}: Let $(x,y)\in (\RR^{K})^2$.
We have 
\begin{equation}
y \in A(x)  \quad \Leftrightarrow  \quad x = J_{A}(x+y).
\end{equation}
Assume that $x+y\in S$. It follows from \ref{t:approxmoni} that there exists $x_{\epsilon}\in \RR^{K}$ such that $x_{\epsilon} = J_{A_{\epsilon}}(x+y)$ and $\|x-x_{\epsilon}\| \le \epsilon$. Let $y_{\epsilon}= x-x_{\epsilon}+y$. We have $x_{\epsilon} = J_{A_{\epsilon}}(x_{\epsilon}+y_{\epsilon})$, that is $y_{\epsilon} \in A_{\epsilon}(x_{\epsilon})$. In addition, $\|y-y_{\epsilon}\| = \|x-x_{\epsilon}\|\le \epsilon$.
\end{proof}

We will next show that \cref{t:approxmon} extends to a wider class of MMOs.

\begin{corollary}
Let $\HH=\RR^K$. Let $(\omega_i)_{1\le i \le I} \in ]0,1]^I$ be such that \mbox{$\sum_{i=1}^I \omega_i=1$}. For every $i\in \{1,\ldots,I\}$, let $A_i\colon \HH \to 2^{\HH}$ be a stationary MMO. Then the same properties as in  \cref{t:approxmon} hold if $A\colon \HH \to 2^{\HH}$ is the MMO with resolvent \mbox{$J_A = \sum_{i=1}^I \omega_i J_{A_i}$}.
\end{corollary}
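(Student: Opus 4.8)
The plan is to reduce the corollary to \cref{t:approxmon} by exploiting the fact that the map $A\mapsto 2J_A-\Id$ is compatible with convex combinations of resolvents. First one should observe that the statement is well posed: each $J_{A_i}$ is firmly nonexpansive with $\dom J_{A_i}=\HH$, and since $\omega_i\in\rzeroun$ with $\sum_{i=1}^I\omega_i=1$, the operator $J_A=\sum_{i=1}^I\omega_iJ_{A_i}$ is single valued, firmly nonexpansive, and defined on all of $\HH$; hence, by \cref{prop:basmaxmon}, it is the resolvent of an MMO, namely $A=2(\Id+Q)^{-1}-\Id$ with $Q=2J_A-\Id$ nonexpansive. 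Writing $Q_i=2J_{A_i}-\Id$ for the nonexpansive operator attached to the stationary MMO $A_i$ through \cref{prop:basmaxmon}, the algebraic observation underpinning the whole argument is the identity $Q=\sum_{i=1}^I\omega_iQ_i$.

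Next I would fix a compact set $S\subset\HH$ and $\epsilon\in\RPP$, and apply \cref{t:approxmon}\ref{t:approxmoni} to each stationary MMO $A_i$ on $S$ with tolerance $\epsilon$: this yields networks $Q_{i,\epsilon}\in\mathcal{N}_{\mathcal{F}}(\HH,\HH)$ such that, with $A_{i,\epsilon}=2(\Id+Q_{i,\epsilon})^{-1}-\Id$, one has $\|J_{A_i}(x)-J_{A_{i,\epsilon}}(x)\|\le\epsilon$ for every $x\in S$. I would then set $Q_{\epsilon}=\sum_{i=1}^I\omega_iQ_{i,\epsilon}$ and $A_{\epsilon}=2(\Id+Q_{\epsilon})^{-1}-\Id$, so that $J_{A_{\epsilon}}=(\Id+Q_{\epsilon})/2=\sum_{i=1}^I\omega_iJ_{A_{i,\epsilon}}$. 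Because $\omega_i\in\rzeroun$ and $\sum_i\omega_i=1$, a one-line convexity estimate gives $\|Q_{\epsilon}(x)-Q_{\epsilon}(x')\|\le\sum_i\omega_i\|Q_{i,\epsilon}(x)-Q_{i,\epsilon}(x')\|\le\|x-x'\|$, so $Q_{\epsilon}$ is nonexpansive. It then remains to realize $Q_{\epsilon}$ as a feedforward network built from the dictionary $\mathcal{F}$: one duplicates the input through the linear layer $x\mapsto(x,\dots,x)$, runs the $Q_{i,\epsilon}$ in parallel (a block-diagonal stacking of their respective layers), and recombines the outputs through the final linear layer $(\xi_1,\dots,\xi_I)\mapsto\sum_i\omega_i\xi_i$. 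I expect this architectural bookkeeping to be the only genuinely delicate point: a naive layer-by-layer Lipschitz bound for this construction is loose (the duplication layer alone has norm $\sqrt I$), but \cref{mod:feedfwd} and the definition of $\mathcal{N}_{\mathcal{F}}$ only require the \emph{overall} operator — here exactly $Q_{\epsilon}$ — to be $1$-Lipschitzian, which we have verified, and the extra linear layers are always admissible under \cref{as:actifun}; hence $Q_{\epsilon}\in\mathcal{N}_{\mathcal{F}}(\HH,\HH)$.

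Finally I would conclude exactly as in the proof of \cref{t:approxmon}. For property \ref{t:approxmoni}: for every $x\in S$, $\|J_A(x)-J_{A_{\epsilon}}(x)\|=\big\|\sum_{i=1}^I\omega_i\big(J_{A_i}(x)-J_{A_{i,\epsilon}}(x)\big)\big\|\le\sum_{i=1}^I\omega_i\epsilon=\epsilon$. For property \ref{t:approxmonii}: given $x\in\HH$ and $y\in A(x)$ with $x+y\in S$, use the equivalence $y\in A(x)\Leftrightarrow x=J_A(x+y)$; set $x_{\epsilon}=J_{A_{\epsilon}}(x+y)$, which satisfies $\|x-x_{\epsilon}\|\le\epsilon$ by \ref{t:approxmoni}, and put $y_{\epsilon}=(x+y)-x_{\epsilon}$; then $x_{\epsilon}=J_{A_{\epsilon}}(x_{\epsilon}+y_{\epsilon})$, i.e. $y_{\epsilon}\in A_{\epsilon}(x_{\epsilon})$, and $\|y-y_{\epsilon}\|=\|x-x_{\epsilon}\|\le\epsilon$. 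This reproduces verbatim the two properties claimed in \cref{t:approxmon}, now for the averaged MMO $A$.
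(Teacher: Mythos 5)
Your proposal is correct and follows essentially the same route as the paper's proof: approximate each $J_{A_i}$ via \cref{t:approxmon}, form $Q_{\epsilon}=\sum_{i=1}^I\omega_iQ_{i,\epsilon}$ so that $J_{A_{\epsilon}}=\sum_{i=1}^I\omega_iJ_{A_{i,\epsilon}}$, and conclude by the triangle inequality, with part \ref{t:approxmonii} handled exactly as in the theorem. Your additional detail on realizing the convex combination as a parallel network (and noting that only the overall operator, not each layer, must be $1$-Lipschitzian) merely spells out what the paper states tersely when asserting $Q_{\epsilon}\in\mathcal{N}_{\mathcal{F}}(\HH,\HH)$.
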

\begin{proof}
First note that $J_A\colon \HH\to \HH$ is firmly nonexpansive \cite[Proposition 4.6]{bauschke2017convex}), hence $A$ is indeed an MMO.
As a consequence of \cref{t:approxmon}, for every compact set $S \subset \HH$ and every $\epsilon \in \RPP$, there exist NNs $(Q_{i,\epsilon})_{1\le i\le I}$ in $\mathcal{N}_{\mathcal{F}}(\HH,\HH)$ such that $(A_{i,\epsilon})_{1\le i \le I} = \big(2(\Id+Q_{i,\epsilon}\big)^{-1}-\Id\big)_{1\le i \le I}$ satisfy:
\begin{equation} \label{e:approxJAi}
    (\forall i \in \{1,\ldots,Q\})(\forall x \in S)\quad 
    \|J_{A_i}(x)-J_{A_{i,\epsilon}}(x)\| \le \epsilon.
\end{equation}
Let $Q_\epsilon = \sum_{i=1}^I \omega_i Q_{i,\epsilon}$. Then $Q_\epsilon \in \mathrm{Lip}_{1}(\RR^{K},\RR^{K})$ and, since it is built from  a linear combination of the outputs of $I$ NNs in $\mathcal{N}_{\mathcal{F}}(\HH,\HH)$ driven with the same input, it belongs to
$\mathcal{N}_{\mathcal{F}}(\HH,\HH)$. In addition, $A_\epsilon = 2(\Id+Q_{\epsilon}\big)^{-1}-\Id$ is such that
\begin{equation}
J_{A_\epsilon} =
    \frac12 \Big(\sum_{i=1}^I \omega_i Q_{i,\epsilon}+\Id\Big) = 
    \sum_{i=1}^I \omega_i J_{A_{i,\epsilon}},
\end{equation}
which allows us to deduce from \eqref{e:approxJAi} that
\begin{equation}
   (\forall x \in S)\quad 
    \|J_{A}(x)-J_{A_{\epsilon}}(x)\| \le 
    \sum_{i=1}^I \omega_i
    \|J_{A_i}(x)-J_{A_{i,\epsilon}}(x)\| \le \epsilon.
\end{equation}
The rest of the proof follows the same line as for \cref{t:approxmon}.
\end{proof}

\begin{remark}
The above results are less accurate than standard universal approximations ones which, for example, guarantee an arbitrary close approximation to any continuous function with a network having only one hidden layer \cite{Horn89,Lesh93}. Indeed, the requirement that the resolvent of a MMO must be firmly nonexpansive induces some significant increase of the difficulty of the mathematical problem. Nonetheless, the firm nonexpansiveness will enable us to build convergent PnP algorithms described in the next sections.
\end{remark}

% **********************************************************
% **********************************************************
\section{Proposed algorithm}
\label{sec:3}
% **********************************************************
% **********************************************************

% **********************************************************
\subsection{Forward-backward algorithm} 
% **********************************************************

Let us now come back to problems of the form \eqref{e:mon}. Such monotone inclusion problems can be tackled by a number of algorithms \cite{combettes2018monotone,combettes2020fixed}, which are all grounded on the use of the resolvent of $A$ (or a scaled version of this operator).
For simplicity, let us assume that $f$ is a smooth function. In this case, a famous algorithm for solving \eqref{e:mon} is the forward-backward (FB) algorithm \cite{chen97, combettes05}, which is expressed as
\begin{equation}\label{e:algFB}
(\forall n \in \NN)\quad 
x_{n+1}=J_{\gamma A}\big(x_{n}-\gamma \nabla f(x_{n})\big)
\end{equation}
where $\gamma >0$. If a neural network $\net$ is used to approximate $J_{\gamma A}$, then a natural substitute for \eqref{e:algFB} is 
\begin{equation}\label{e:algFBpap}
(\forall n \in \NN)\quad
x_{n+1}=\net\big(x_{n}-\gamma \nabla f(x_{n})\big).
\end{equation}
The following convergence result then straightforwardly follows from standard asymptotic properties of the FB algorithm \cite{combettes05}.

\begin{proposition}\label{prop:convFB}
Let  $\mu \in \RPP$ and let $\gamma \in ]0,2/\mu[$. Let $f\colon \HH \to \RR$ be a convex differentiable function with $\mu$-Lipschitzian gradient. Let $\net$ be a neural network such that $\widetilde{J}$ is  $1/2$-averaged as in \eqref{e:JAQ}.
Let $\widetilde{A}$ be the maximally monotone operator equal to $(\widetilde{J}^{-1}-\Id)$. Assume that the set $\mathcal{S}_{\gamma}$ of zeros of $\nabla f + \gamma^{-1}\widetilde{A}$ is nonempty. Then,  the sequence $(x_{n})_{n\in \NN}$ generated by  iteration \eqref{e:algFBpap} converges (weakly) to $\widehat{x}\in \mathcal{S}_{\gamma}$, i.e., $\widehat{x}$ satisfies 
\begin{equation}\label{e:mon_learned}
0 \in \nabla f(\widehat{x}) + \gamma^{-1}\widetilde{A} (\widehat{x}).
\end{equation}
\end{proposition}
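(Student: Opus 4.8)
The plan is to recognize \eqref{e:algFBpap} as an instance of the classical Forward-Backward iteration and then quote its standard convergence theory. The key observation is that, by hypothesis, $\widetilde{J}$ is $1/2$-averaged, i.e.\ of the form $(\Id+Q)/2$ with $Q$ nonexpansive. By \cref{prop:basmaxmon}, this means precisely that $\widetilde{J} = J_{\widetilde{A}}$ for the maximally monotone operator $\widetilde{A} = \widetilde{J}^{-1}-\Id$, so the update $x_{n+1}=\widetilde{J}(x_n-\gamma\nabla f(x_n))$ coincides with $x_{n+1}=J_{\gamma B}(x_n-\gamma\nabla f(x_n))$ once we set $\gamma B = \widetilde{A}$, that is $B = \gamma^{-1}\widetilde{A}$. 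Hence \eqref{e:algFBpap} is exactly the FB algorithm applied to the monotone inclusion $0\in\nabla f(x)+B(x)$.

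Next I would verify the two standing hypotheses of the FB convergence theorem (see \cite{combettes05} or \cite[Theorem 26.14]{bauschke2017convex}). First, $\nabla f$ is $\mu$-Lipschitzian and $f$ is convex, so by the Baillon-Haddad theorem $\nabla f$ is $(1/\mu)$-cocoercive. Second, $B=\gamma^{-1}\widetilde{A}$ is maximally monotone as a positive scalar multiple of the maximally monotone operator $\widetilde{A}$. The step-size condition required by the theorem is $\gamma \in \left]0, 2\beta\right[$ where $\beta$ is the cocoercivity constant of $\nabla f$; here $\beta = 1/\mu$, which gives exactly $\gamma\in\left]0,2/\mu\right[$, as assumed. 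Finally, the set of zeros of $\nabla f + B = \nabla f + \gamma^{-1}\widetilde{A}$ is $\mathcal{S}_\gamma$, which is assumed nonempty.

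With these ingredients in place, the FB convergence theorem yields that the sequence $(x_n)_{n\in\NN}$ generated by \eqref{e:algFBpap} converges weakly to some $\widehat{x}\in\mathcal{S}_\gamma$, and any element of $\mathcal{S}_\gamma$ is by definition a point satisfying $0\in\nabla f(\widehat{x})+\gamma^{-1}\widetilde{A}(\widehat{x})$, which is \eqref{e:mon_learned}. I do not anticipate a genuine obstacle here: the entire content is the translation of the averagedness assumption on $\widetilde{J}$ into the resolvent form via \cref{prop:basmaxmon}, after which the result is a verbatim application of known FB theory. The only point requiring a little care is the bookkeeping of the scaling factor $\gamma$ — making sure that absorbing $\gamma$ into $\widetilde{A}$ (rather than keeping a $J_{\gamma A}$ with a separate $\gamma$) is done consistently, since the network $\widetilde{J}$ already plays the role of $J_{\gamma A}$ for the learned operator and thus the effective operator in the inclusion is $\gamma^{-1}\widetilde{A}$.
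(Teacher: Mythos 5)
Your proof is correct and follows essentially the same route as the paper, which simply invokes the standard convergence theory of the forward-backward algorithm from \cite{combettes05} after identifying $\net$ as the resolvent of $\widetilde{A}$ (equivalently, of $\gamma\cdot\gamma^{-1}\widetilde{A}$) via \cref{prop:basmaxmon}. Your additional bookkeeping of the cocoercivity constant via Baillon--Haddad and of the scaling $\gamma B=\widetilde{A}$ is exactly the verification the paper leaves implicit.
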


\begin{remark}\
\begin{itemize}
\item[(i)] For a given choice of $\widetilde{A}$, the stepsize $\gamma$ in the proposed PnP-FB does not only act on the convergence profile, but also on the set of solutions, as shown in \eqref{e:mon_learned}.
\item[(ii)] The technical assumption $\mathcal{S}_{\gamma} \neq \varnothing$ can be waived if $\nabla f+ \gamma^{-1}\widetilde{A}$ is strongly monotone \cite[Corollary 23.37]{bauschke2017convex}). Then there exists a unique solution to \eqref{e:mon_learned}.
This is achieved if $f$ is strongly convex or if $\net= (\Id+Q)/2$ where $Q=\widetilde{Q}/(1+\delta)$ with $\delta \in \RPP$ and $\widetilde{Q}\colon \HH\to \HH$ nonexpansive. In the latter case, $\widetilde{A} = B+ \delta(2+\delta)^{-1} \Id$ where $B\colon\HH\to 2^{\HH}$ is maximally monotone. An example of such a strongly monotone operator $\widetilde{A}$ is encountered in elastic net regularization.
\item[(iii)] A classical result by Rockafellar states that $\widetilde{A}$ is the subdifferential of some convex lower-semincontinuous function if and only if $\widetilde{A}$ is maximally \emph{cyclically} monotone \cite{combettes2018monotone,rockafellar1966characterization}. As we only enforce the maximal monotonicity, \eqref{e:mon_learned} does not necessarily correspond to a minimization problem in general.
\end{itemize}
\end{remark}

Algorithm \eqref{e:algFBpap} is common to other frameworks, such as regularization-by-denoising (RED) and unfolded networks. As underlined by the authors of \cite{cohen2020regularization}, the RED-PRO algorithm \cite[Algorithm 4.1]{cohen2020regularization} can be written as a PnP-FB algorithm of the same form as Algorithm~\eqref{e:algFBpap}, where $\net = (1-\alpha) \operatorname{Id} + \alpha \netav$, with $\alpha \in ]0,1/2[$ and $\netav$ being $d$-demicontractive for $d\in [0,1[$. 
So, although \cite[Algorithm 4.1]{cohen2020regularization} and Algorithm~\eqref{e:algFBpap} have the same structure (considering $\alpha=1/2$ and that nonexpansiveness implies $0$-demicontractive), they also exhibit several differences. 
First, in \cite{cohen2020regularization} the authors train $\netav$ as a denoiser, while we train $\widetilde{J}$. In practice, our approach is closer to classical PnP algorithms where a proximity operator is replaced by a denoiser (and not a relaxed version of the denoiser). Second, the characterization of the limit point differs since, in \cite[Theorems 4.3 \& 4.4]{cohen2020regularization}, it is shown that RED-PRO converges to a minimizer of $f$, where the solution is a fixed point of $\net$.

If a finite number $N$ of iterations of Algorithm~\eqref{e:algFBpap} are performed, unfolding the FB algorithm results in the NN architecture given in \cref{fig:NNFB}. 
\begin{figure}
\centering
\includegraphics[width=0.95\textwidth]{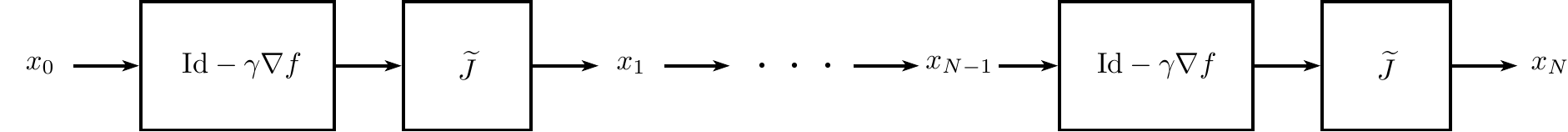}
\caption{\small Unfolded FB algorithm over $N$ iterations.}%
\label{fig:NNFB}
\end{figure}
If $\gamma < 2/\mu$, the gradient operator $(\Id-\gamma \nabla f)$ is a $\gamma\mu/2$-averaged operator. It can thus be interpreted as an activation operator \cite{combettes2019}. This activation operator is however non standard both because of its form and its dependence on the observed data $z$. A special case arises when $f$ corresponds to a least squares data fit term, i.e.,
\begin{equation}\label{e:LScrit}
(\forall x \in \HH) \quad f(x) = \frac{1}{2}\|Hx-z\|^{2},
\end{equation}
where $z$ belongs to some real Hilbert space $\GG$ and $H$ is a bounded operator from $\HH$ to $\GG$  modelling some underlying linear observation process (e.g. a degradation operator in image recovery). Then, $\nabla f\colon x \mapsto H^{*}(Hx-z)$ where $H^*$ denotes the adjoint of $H$ and $\mu = \|H\|^2$. Hence, $\Id-\gamma \nabla f$ is an affine operator involving a self-adjoint weight operator $\Id-\gamma H^{*}H$ and a bias $\gamma H^{*}z$. The unfolded network has thus a structure similar to a residual network where groups of layers are identically repeated and the bias introduced in the gradient operator depends on $z$. A parallel could also be drawn with a recurrent neural network driven with a stationary input, which would here correspond to $z$.
It is worth pointing out that, under the assumptions of \cref{prop:convFB}, the unfolded network in \cref{fig:NNFB} is robust to adversarial input perturbations, since it is globally nonexpansive. Note finally that, in the case when $f$ is given by \eqref{e:LScrit}, allowing the parameter $\gamma$ and the operator $\net$ to be dependent on $n \in \{1, \ldots,N\}$ in \cref{fig:NNFB} would yield an extension of ISTA-net \cite{zhang2018ista}. However, as shown in \cite{chen2018theoretical}, convergence of such a scheme requires specific assumptions on the target signal model. Other works have also proposed NN architectures inspired from primal dual algorithms \cite{adler2018learned,banert2020data,jiu2020deep}.

% **********************************************************
\subsection{Training}
% **********************************************************

A standard way of training a NN operating on $\HH=\RR^K$ for PnP algorithms is to train a denoiser for data corrupted with Gaussian noise \cite{zhang2018learning}. Let $\overline{\boldsymbol{x}} = (\overline{x}_{\ell})_{1 \le \ell \le L}$ be training set of $L$ images of $\HH$ and let
\begin{equation}\label{eq:denoising}
    (\forall \ell\in \{1, \ldots, L\}) \qquad y_{\ell} = \overline{x}_{\ell}+\sigma_{\ell} w_{\ell}
\end{equation}
be a noisy observation of $\overline{x}_{\ell}$, where $\sigma_{\ell}\in \RPP$ and $(w_{\ell})_{1\le {\ell} \le L}$ are assumed to be realizations of standard normal i.i.d. random variables. In practice, either $\sigma_{\ell} \equiv \sigma >0$ is chosen to be constant during training \cite{zhang2017learning}, or $\sigma_{\ell}$ is chosen to be a realization of a random variable with uniform distribution in $[0,\sigma]$,  for $\sigma\in \RPP$ \cite{zhang2019deep}.

The NN $\net$ described in the previous section will be optimally chosen within a family $\{\net_\theta \mid \theta\in \RR^P\}$ of NNs. For example, the parameter vector $\theta$ will account for the convolutional kernels and biases of a given network architecture. An optimal value $\widehat{\theta}$ of the parameter vector is thus a solution to the following problem:
\begin{equation}
\minimize{\theta} 
\sum_{\ell=1}^L
\|\net_\theta(y_{\ell})-\overline{x}_{\ell}\|^2
\quad \text{s.t.} \quad
\netav_\theta = 2\net_\theta-\Id \,\,\text{ is nonexpansive}.
\label{eq:simplified}
\end{equation}
(The squared $\ell_2$ norm in \eqref{eq:simplified} can be replaced by another cost function, e.g., an $\ell_1$ norm \cite{zhang2020plug}.)
The main difficulty with respect to a standard training procedure is the nonexpansiveness constraint stemming from \cref{prop:basmaxmon} which is crucial to ensure the convergence of the overall PnP algorithm.
In this context, the tight sufficient conditions described in \cref{p:propnonexpan}  for building the associated nonexpansive operator $\netav_\theta$ are however difficult to enforce. For example, the maximum value of the left-hand side in inequality~\eqref{e:Lipboundsep} is NP-hard to compute \cite{virmaux2018lipschitz} and estimating an accurate estimate of the Lipschitz constant of a NN requires some additional assumptions \cite{neacsu2020accuracy} or some techniques which do not scale well to high-dimensional data \cite{fazlyab2019efficient}. In turn, by assuming that, for every $\theta \in  \RR^P$ $Q_\theta$ is differentiable, we leverage on the fact that $\netav_\theta$ is nonexpansive if and only if its Jacobian $\jac \netav_\theta$ satisfies
\begin{equation}\label{eq:constrained_jac}
(\forall x \in \HH)\qquad \| \jac \netav_\theta(x) \| \le 1. 
\end{equation}
In practice, one cannot enforce the constraint in \eqref{eq:constrained_jac} for all $x \in \HH$. 
We therefore propose to impose this constraint on every segment $[\overline{x}_{\ell},\net_\theta(y_{\ell})]$ with 
$\ell\in \{1, \ldots, L\}$, or more precisely at points
\begin{equation}
    \widetilde{x}_{\ell} = \varrho_{\ell} \overline{x}_{\ell}+(1-\varrho_{\ell})\net_{\theta}(y_{\ell}),
\label{eq:xtilde_def}
\end{equation} 
where $\varrho_{\ell} $ is a realization of a random variable with uniform distribution on [0,1].
To cope with the resulting constraints, instead of using projection techniques which might be slow \cite{terris2020building} and raise convergence issues when embedded in existing training algorithms \cite{alacaoglu2020convergence}, we propose to employ an exterior penalty approach.
The final optimization problem thus reads 
\begin{equation}    \label{pb:training_final}
    \minimize{\theta} 
    \sum_{\ell=1}^L \Phi_{\ell}(\theta),
\end{equation}
where, for every $\ell\in \{1,\ldots,L\}$,
\begin{equation} 
\begin{aligned}
    \Phi_{\ell}(\theta) 
    &= 
    \|\net_\theta(y_{\ell})-\overline{x}_{\ell} \|^2 
    + \lambda \max\left\{\|\jac \netav_\theta(\widetilde{x}_{\ell})\|^2, 1-\varepsilon\right\},
\end{aligned}
\label{eq:loss_summarized}
\end{equation} 
$\lambda\in \RPP$ is a penalization parameter, and $\varepsilon \in ]0,1[$ is a parameter allowing us to control the constraints. Standard results concerning penalization methods \cite[Section 13.1]{luenberger2016linear}, guarantee that, if
$\widehat{\theta}_\lambda$ is a solution to \eqref{pb:training_final} for $\lambda\in \RPP$, then 
$(\forall \ell \in \{1,\ldots,L\})$
$\lim_{\lambda \to +\infty} \|\jac \netav_{\widehat{\theta}_\lambda}(\Tilde{x}_{\ell})\|^2 \le 1-\varepsilon$.
Then, since $\varepsilon>0$, there exists $\overline{\lambda} \in \RPP$ such that,
for every $\lambda \in [\overline{\lambda},+\infty[$ and
every $\ell \in \{1,\ldots,L\}$, 
$\|\jac \netav_{\widehat{\theta}_\lambda}(\Tilde{x}_{\ell})\| \le 1$.

\begin{remark}
Hereabove, we have made the assumptions that the network is differentiable.
Automatic differentiation tools however are applicable to networks which contain nonsmooth linearities such as ReLU (see \cite{bolte2020conservative} for a theoretical justification for this fact).
\end{remark}

To solve \eqref{pb:training_final} numerically, we resort to the Adam optimizer \cite{zhang2019deep} as described in \cref{alg:training}. 
This algorithm uses a fixed number of iterations \mbox{$N\in\NN^*$} and relies on approximations to the gradient of $\sum_{\ell}\Phi_{\ell}$ computed on randomly sampled batches of size $D$, selected from the training set of images $(\overline{x}_{\ell})_{1 \le \ell \le L}$. More precisely, at each iteration $t \in \{1, \ldots, N\}$, we build the approximated gradient $ \frac{1}{D} \sum_{d=1}^D g_d$ (see lines 3-9), followed by an Adam update (line 10) consisting in a gradient step on $\theta_d$ with adaptive moment \cite{kingma2014adam}. 
Then the approximated gradient is computed as follows. For every $d \in \{1 , \ldots, D\}$, we select randomly an image from the training set (line 4), we draw at random a realization of a normal i.i.d. noise that we use to build a noisy observation $y_d$ (line 5-6). We then build $\widetilde{x}_d$ as in \eqref{eq:xtilde_def} (lines 5-7) and compute the gradient $g_d$ of the loss $\Phi_d$ w.r.t. to the parameter vector at its current estimate
$\theta_n$ (line 8). Note that any other gradient-based algorithm, such as SGD or RMSprop \cite{mukkamala2017variants} could be used to solve~\eqref{pb:training_final}. 

\begin{algorithm}
\caption{Adam algorithm to solve \eqref{pb:training_final}}
\label{alg:training}
\begin{algorithmic}[1]
\STATE{Let $D\in\NN^*$ be the batch size, and $N\in\NN^*$ be the number of training iterations.}
\FOR{$n=1,\ldots,N$}
\FOR{$d=1,\ldots,D$}
\STATE{Select randomly $\ell \in \{1, \ldots, L\}$};
\STATE{Draw at random $w_d \sim \mathcal{N}(0,1)$ and $\varrho_d\sim \mathcal{U}([0,1])$};
\STATE{$y_d = \overline{x}_{\ell}+\sigma w_d$};
\STATE{$\widetilde{x}_d = \varrho_d \overline{x}_{\ell}+(1-\varrho_d)\net_{\theta_n}(y_d)$};
\STATE{$g_d =\nabla_{\theta} \Phi_d(\theta_n)$};
\ENDFOR
\STATE{$\theta_{n+1} = \text{Adam}( \frac{1}{D}\sum_{d=1}^Dg_d,\theta_n)$};
\ENDFOR
\RETURN $\net_{\theta_{N}}$
\end{algorithmic}
\end{algorithm}

In order to compute the spectral norm $\| \jac \netav_\theta(x) \|$ for a given image $x \in \HH$, we use the power iterative method (see details in Appendix~\ref{apx:powit}). 

\begin{remark}\ \label{rk:nonexp}
\begin{enumerate}
\item \label{rk:nonexp:i} Other works in the GAN literature have investigated similar regularizations \cite{gulrajani2017improved, petzka2017regularization, wei2018improving} to constrain the gradient norm of the discriminator (recall that, the disciminator being a function with values in $\RR$, its gradient is well defined). In our work, we aim at constraining the Lipschitz constant of \mbox{$\netav:\RR^N\to\RR^N$}, the gradient of which is not defined (only its Jacobian is), hence the need of a more involved method for computing the spectral norm.
\item \label{rk:nonexp:ii}A related approach was presented in \cite{hoffman2019robust}, where the loss is regularized with the Froebenius norm of the Jacobian. The latter is not enough to ensure convergence of the PnP method \eqref{e:algFBpap} which requires to constrain the spectral norm $\|\cdot\|$ of the Jacobian.
\item \label{rk:nonexp:iii}The power iterative method has been used in previous works \cite{miyato2018spectral, ryu2019plug, yoshida2017spectral}. In particular, RealSN \cite{ryu2019plug} relies on a power iteration to compute the Lipschitz constant of each convolutional layer. In our case, we compute the spectral norm of the Jacobian for the full network $\netav$.
\end{enumerate}
\end{remark}

% ************************************************
\section{Simulations and results}
\label{sec:4}
% ************************************************

\subsection{Experimental setting}
\label{Ssect:exp_settings}

\paragraph{Inverse Problem}
We focus on inverse deblurring imaging problems, where the objective is to find an estimate $\widehat{x} \in \RR^K$ of an original unknown image $\overline{x} \in \RR^K$, from degraded measurements $z\in \RR^K$ given by
\begin{equation}
z = H \overline{x}+e, 
\label{eq:invpb}
\end{equation}
where $H \colon \RR^K\to\RR^K$ is a blur operator and $e \in \RR^K$ is a realization of an additive white Gaussian random noise with zero-mean and standard deviation $\nu\in \RPP$. In this context, a standard choice for the data-fidelity term is given by \eqref{e:LScrit}. In our simulations, $H$ models a blurring operator implemented as a circular convolution with impulse response $h$. We will consider different kernels $h$ taken from \cite{levin2009understanding} and \cite{bertocchi2020deep}, see \cref{fig:kernels} for an illustration. The considered kernels are normalized such that the Lipschitz constant $\mu$ of the gradient of $f$ is equal to $1$.

\paragraph{Datasets} 
Our training dataset consists of $50000$ test images from the ImageNet dataset \cite{deng2009imagenet} that we randomly split in $98\%$ for training and $2\%$ for validation. In the case of grayscale images, we investigate the behavior of our method either on the full BSD68 dataset \cite{martin2001BSDS} or on a subset of 10 images, which we refer to as the BSD10 set. For color images, we consider both the BSD500 test set~\cite{martin2001BSDS} and the Flickr30 test set~\cite{xu2014deep}.\footnote{We consider normalised images, where the coefficient values are in $[0,1]$ (resp. $[0,1]^3$) for grayscale (resp. color) images.} Eventually, when some fine-tuning is required, we employ the Set12 and Set18 datasets \cite{zhang2017learning} for grayscale and color images, respectively.

\begin{figure}[t]
\centering
\includegraphics[width=0.9\textwidth]{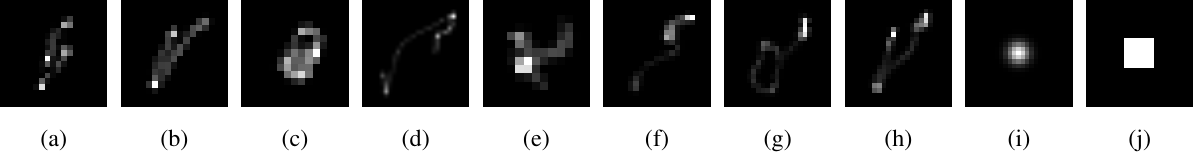}
\caption{\small Blur kernels used in our simulations. (a)-(h) are kernels 1-8 from \cite{levin2009understanding} respectively while (i) is the kernel from the GaussianA setup and (j) from the Square setup in \cite{bertocchi2020deep}.}
\label{fig:kernels}
\end{figure}

\paragraph{Network architecture and pretraining} 
In existing PnP algorithms involving NNs (see e.g. \cite{ledig2017photo, zhang2017beyond, zhang2017learning, zhang2019deep}), the NN architecture $\net$ often relies on residual skip connections. This is equivalent, in \eqref{e:defNN}, to set $Q =  \Id + \widetilde{T}_{M} \ldots \widetilde{T}_1$ where, for every $m\in \{1,\ldots,M\}$, $\widetilde{T}_m$ is standard neural network layer (affine operator followed by activation operator).
More specifically, the architecture we consider for $\net$ is such that $M=20$. It is derived from the DnCNN-B architecture \cite{zhang2017beyond} from which we have removed batch normalization layers and where we have replaced ReLUs with LeakyReLUs (see \cref{fig:arch_dncnn}). 

We first pretrain the model $\net$ in order to perform a denoising task for a variable level of noise, without any Jacobian regularization. For each training batch, we generate randomly sampled patches of size $50\times 50$ from images that are randomly rescaled and flipped. More precisely, we consider Problem~\eqref{pb:training_final}-\eqref{eq:loss_summarized} with $\lambda=0$, and $(\sigma_\ell)_{1 \le \ell \le L}$ chosen to be realizations of i.i.d. random variable with uniform distribution in $[0,0.1]$ for each patch. 
We use the Adam optimizer \cite{kingma2014adam} to pretrain the network with learning rate $10^{-4}$, and considering $150$ epochs, each consisting of $490$ iterations of the optimizer. The learning rate is divided by $10$ after $100$ epochs and we clip gradient norms at $10^{-2}$ for an improved stability during training. 
This pretrained network will serve as a basis for our subsequent studies. The details regarding the training of our networks will be given on a case-by-case basis in the following sections. 

All models are trained on 2 Nvidia Tesla 32 Gb V100 GPUs and experiments are performed in PyTorch\footnote{Code publicly available at  \url{https://github.com/basp-group/xxx} (upon acceptance of the paper)}.

\begin{figure}
    \centering
    \includegraphics[width=0.8\textwidth]{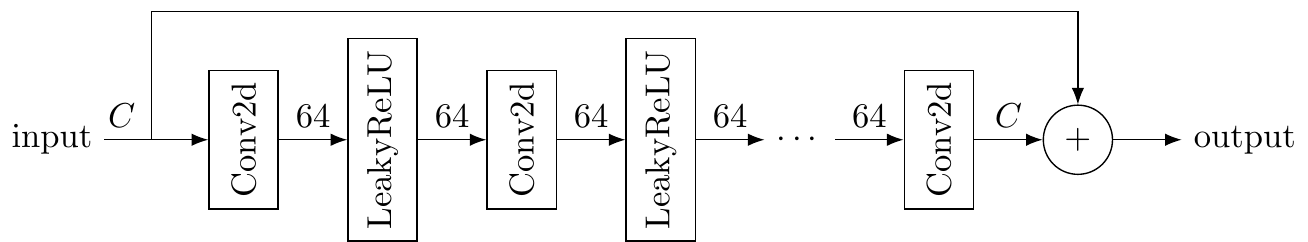}
    \caption{\small Proposed DnCNN architecture of $\net$, with a total of 20 convolutional layers. It corresponds to a modified version of the DnCNN-B architecture \cite{zhang2017beyond}.
    The number of channels $C$ is indicated above arrows ($C=1$ for grayscale images and $C=3$ for color ones).}
    \label{fig:arch_dncnn}
\end{figure}

\paragraph{Goal} 
We aim to study the PnP-FB algorithm~\eqref{e:algFBpap} where $\net$, chosen according to the architecture given in \cref{fig:arch_dncnn}, has been trained in order to solve~\eqref{eq:loss_summarized}. 
We will first study the impact of the choice of the different parameters appearing in the training loss~\eqref{eq:loss_summarized} on the convergence of the PnP-FB algorithm and on the reconstruction quality. Then, we will compare the proposed method to state-of-the-art iterative algorithms either based on purely variational or PnP methods. 

We evaluate the reconstruction quality with Peak Signal to Noise Ratio (PSNR) and Structural Similarity Index Measure (SSIM) metrics \cite{wang2004image}. The PSNR between an image $x\in \RR^K$ and the ground truth $\overline{x}\in \RR^K$ is defined as
\begin{equation}
    \operatorname{PSNR}(x,\overline{x}) = 20\log_{10}\left(\frac{\sqrt{K}\,\max_{1 \le \ell \le L} \overline{x}_{\ell}}{\|x-\overline{x}_{\ell}\|}\right),
\end{equation}
where, in our case, we have $\max_{1 \le \ell \le L} \overline{x}_\ell = 1$. The SSIM is given by
\begin{equation}
    \operatorname{SSIM}(x,\overline{x}) =\frac{(2\mu_x\mu_{\overline{x}}+\vartheta_1)(2\sigma_{x\overline{x}}+\vartheta_2)}{(\mu_x^2+\mu_{\overline{x}}^2+\vartheta_1)(\sigma_x^2+\sigma_{\overline{x}}^2+\vartheta_2)},
\end{equation}
where $(\mu_x,\sigma_x)$ and $(\mu_{\overline{x}},\sigma_{\overline{x}})$ are the mean and the variance of $x$ and $\overline{x}$ respectively, $\sigma_{x\overline{x}}$ is the cross-covariance between $x$  and $\overline{x}$, and $(\vartheta_1,\vartheta_2) =(10^{-4},9\times 10^{-4})$.

\subsection{Choice of the parameters}
\label{Ssec:lipreg} 

In this section, we study the influence of the parameters\footnote{We noticed that the influence of $\varepsilon>0$ in \eqref{eq:loss_summarized} was negligible compared with that of $(\lambda, \sigma, \gamma)$.} $(\lambda, \sigma, \gamma)$ on the results of the PnP-FB algorithm~\eqref{e:algFBpap} applied to the NN in \cref{fig:arch_dncnn}. We recall that $\lambda$ is the parameter acting on the Jacobian regularization, $\sigma$ is the noise level for which the denoiser is trained, and $\gamma$ is the stepsize in the PnP-FB algorithm~\eqref{e:algFBpap}.

\paragraph{Simulation settings}
We consider problem~\eqref{eq:invpb} with $H$ associated with the kernels shown in \cref{fig:kernels}(a)-(h), and $\nu=0.01$. In this section, we consider the grayscale images from the BSD68 dataset. 

To investigate the convergence behavior of the PnP-FB algorithm, we consider the quantity defined at iteration $n \in \NN\setminus\{0\}$ as
\begin{equation}    \label{eq:crit_cn}
c_n = \|x_{n}-x_{n-1}\|/\|x_0\|, 
\end{equation}
where $(x_n)_{n\in \NN}$ is the sequence generated by the PnP-FB algorithm \eqref{e:algFBpap}. Note that the quantity $(c_n)_{n\in \NN}$ is known to be monotonically decreasing if the network $\net$ is firmly nonexpansive \cite{bauschke2017convex}.

\paragraph{Influence of the Jacobian penalization}

We study the influence of $\lambda$ on the convergence behavior of the PnP-FB algorithm~\eqref{e:algFBpap}. In particular we consider $\lambda \in \{5\times 10^{-7}, 10^{-6}, 2\times 10^{-6}, 5\times 10^{-6}, 10^{-5}, 2\times 10^{-5}, 4\times 10^{-5}, 1.6\times 10^{-4}, 3.2 \times 10^{-4}, 6.4 \times 10^{-4}\}$.

After pretraining, we train our DnCNN by considering the loss given in~\eqref{eq:loss_summarized}, in which we set  $\varepsilon=5\times 10^{-2}$ and $\sigma=0.01$. The batches are built as in the pretraining setting. The network is trained for $100$ epochs and the learning rate is divided by $10$ at epoch $80$. The training is performed with \cref{alg:training} where $D=100$ and  $N = 4.9\times 10^4$. For Adam's parameters, we set the learning rate to $10^{-4}$ and the remaining parameters to the default values provided in \cite{kingma2014adam}. 

To verify that our training loss enables the firm nonexpansiveness of our NN $\net$, we evaluate the norm of the Jacobian $\|\jac \netav(y_{\ell}) \|$ on a set of noisy images $(y_{\ell})_{1 \le \ell \le 68}$, obtained from the BSD68 test set considering the denoising problem~\eqref{eq:denoising}. The maximum of these values is given in \cref{tab:lambdas} for the different considered values of $\lambda$. We observe that the norm of the Jacobian decreases as $\lambda$ increases and is smaller than $1$ for $\lambda\geq 10^{-5}$. 

We now investigate the convergence behavior of the PnP-FB algorithm, depending on $\lambda$, considering BSD10 (a subset of BSD68). In our simulations, we set $\gamma=1/\mu=1$.
In \cref{fig:ex2} we show the values $(c_n)_{1 \le n \le 1000}$ for $1000$ iterations, considering kernel (a) from \cref{fig:kernels} for the different values of $\lambda$. 
The case $\lambda=0$ corresponds to training a DnCNN without the Jacobian regularization.
We observe that the stability of the PnP-FB algorithm greatly improves as $\lambda$ increases: for $\lambda\geq 10^{-5}$, all curves are monotonically decreasing. These observations are in line with the metrics from \cref{tab:lambdas} showing that $\|\jac \netav(y_{\ell})\|\leq 1$ for $\lambda \geq 10^{-5}$. The case when $\|\jac \netav\|$ is slightly larger than $1$ is of interest. In particular, when $\lambda=5\times 10^{-6}$, $\|\jac \netav \|^2 \approx 1.03$ (see \cref{tab:lambdas}) one observes that most curves from \cref{fig:ex2} (e) are monotonically decreasing, but some are not. When $\lambda=10^{-6}$, $\|\jac \netav \|^2 \approx 1.35$ (see \cref{tab:lambdas}), none of the curves from \cref{fig:ex2} (c) are monotonically decreasing, but we observed convergence in some case for a larger number of iterations.

These results confirm that by choosing an appropriate value of $\lambda$, one can ensure $Q$ to be 1-Lipschitz, i.e. $\net$ to be firmly nonexpansive, and consequently we secure the convergence of the PnP-FB algorithm~\eqref{e:algFBpap}. It also confirms that $\|\jac \netav \| \leq 1$ is a sufficient condition to ensure convergence. Finally, we also emphasize that the average PSNR values obtained with the PnP-FB algorithm, for the different considered $\lambda$, has a bell shape with maximum obtained when $\lambda=10^{-5}$.

\begin{table}[h!]
\footnotesize
\setlength\tabcolsep{2pt}
\centering
\begin{tabular}{lccccc|ccccc}
\hline
$\lambda$ & $0$ & $5\!\times\!10^{-7}$ & $1\!\times\!10^{-6}$ & $2\!\times\!10^{-6}$ & $5\!\times\!10^{-6}$ & $1\!\times\!10^{-5}$ & $4\!\times\!10^{-5}$ & $1.6\!\times\!10^{-4}$ & $3.2\!\times\!10^{-4}$ \\
\hline
$\|\jac \netav\|^2$ & 31.36 & 1.65 & 1.349 & 1.156 & 1.028 & 0.9799 & 0.9449 & 0.9440 & 0.9401\\
\hline 
\end{tabular}
\caption{\small Numerical evaluation of the firm nonexpansiveness $\net$ on a denoising problem on the BSD68 test set for different values of $\lambda$. 
}
\label{tab:lambdas}
\end{table}

\begin{figure}[ht]
\centering
\includegraphics[width=1.0\textwidth]{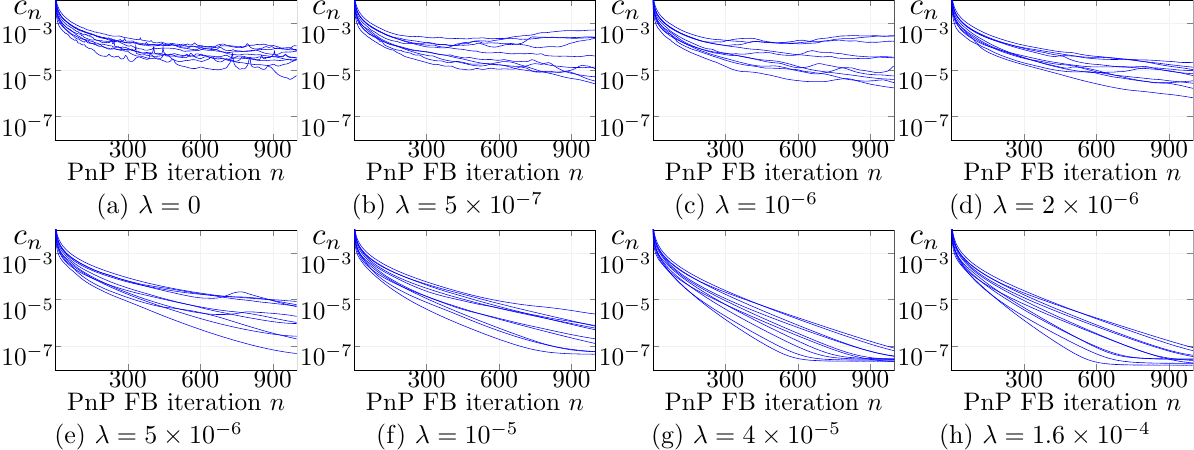}
\caption{\small Influence of $\lambda \in \{0,5\times 10^{-7}, 10^{-6}, 2\times 10^{-6},  5\times 10^{-6}, 10^{-5}, 4\times 10^{-5}, 1.6\times 10^{-4}\}$ on the stability of the PnP-FB algorithm for the deblurring problem with kernel in \cref{fig:kernels}(a). (a)-(h): On each graph, evolution of the quantity $c_n$ defined in~\eqref{eq:crit_cn} for each image of the BSD10 test set,  for a value of $\lambda$,  along the iterations of the PnP-FB algorithm \eqref{e:algFBpap}.}
\label{fig:ex2}
\end{figure}

\paragraph{Influence of the stepsize and training noise level}

Second, we investigate the influence $(\sigma, \gamma)$ on the reconstruction quality of the images restored with the PnP-FB algorithm. Indeed, according to \cref{prop:convFB}, the value of $\gamma$ acts on the solution, and hence potentially on the reconstruction quality.
We train the NN $\net$ given in \cref{fig:arch_dncnn} for $\sigma  \in \{0.005, 0.006, 0.007, 0.008, 0.009,\allowbreak 0.01\}$. As per the procedure followed in the study of the parameter $\lambda$, after pretraining, we train  $\net$  by considering the loss given in~\eqref{eq:loss_summarized}, in which we set  $\varepsilon=5\times 10^{-2}$ and $\lambda=10^{-5}$.
The batches are built as in the pretraining setting. The network is trained for $100$ epochs and the learning rate is divided by $10$ at epoch $80$. The training is performed with \cref{alg:training} where $D=100$ and  $N = 4.9\times 10^4$. For Adam's parameters, we set the learning rate to $10^{-4}$ and the remaining parameters to the default values provided in \cite{kingma2014adam}. We subsequently plug the trained DnCNN $\net$ in the PnP-FB algorithm \eqref{e:algFBpap}, considering different values for $\gamma \in  ]0,2[$. 
In these simulations, we focus on the case when the blur kernel in Problem~\eqref{eq:invpb} corresponds to the one shown in \cref{fig:kernels}(a).

Before discussing the simulation results, we present a heuristic argument suggesting that (i) $\sigma$ should scale linearly with $\gamma$, and (ii) the appropriate scaling coefficient is given as $2 \nu \|h\|$.
Given the choice of the data-fidelity term \eqref{e:LScrit}, the PnP-FB algorithm~\eqref{e:algFBpap} reads
\begin{align}\label{eq:iter_nongauss}
(\forall n \in \NN)\quad 
    x_{n+1} 
    &
    = \net\big( x_n - \gamma H^* (Hx_n - H \overline{x}-e) \big).
\end{align}
We know that, under suitable conditions, the sequence $(x_n)_{n\in \NN}$ generated by \eqref{eq:iter_nongauss} converges to a fixed point $\widehat{x}$, solution to the variational inclusion problem~\eqref{e:mon_learned}. 
We assume that $\widehat{x}$ lies close to $\overline{x}$ up to a random residual $e'=H(\widehat{x} - \overline{x})$, whose components are uncorrelated and with equal standard deviation, typically expected to be bounded from above by the standard deviation $\nu$ of the components of the original noise $e$. 
Around convergence, \eqref{eq:iter_nongauss} therefore reads as
\begin{align}\label{eq:iter_nongauss_fp}
   \widehat{x} = \net \left( \widehat{x} - \gamma H^*\left(e'-e\right)  \right),
\end{align}
suggesting that, $\net$ is acting as a denoiser of $\widehat{x}$ for an effective noise $-\gamma H^*(e'-e)$.
If the components of $e'-e$ are uncorrelated, the standard deviation of this noise is bounded by $\gamma\nu_{\text{eff}}$, with $\nu_{\text{eff}} =  2 \nu \|h\|$, a value reached when $e'=-e$. 
This linear function of $\gamma$ with scaling coefficient $\nu_{\text{eff}}$ thus provides a strong heuristic for the choice of the standard deviation $\sigma$ of the training noise.
For the considered kernel (shown in \cref{fig:kernels}(a)), we have $\nu_{\text{eff}}=0.0045$, so the interval $\sigma\in[0.005,0.01]$ also reads $\sigma\in[1.1\,  \nu_{\text{eff}}, 2.2\,\nu_{\text{eff}}]$.

In \cref{fig:metrics_gamma_sigma} we provide the average PSNR (left) and SSIM (right) values associated with the solutions to the deblurring problem for the considered simulations as a function of $\sigma/\gamma\nu_{\text{eff}}$. 
For each sub-figure, the different curves correspond to different values of $\gamma$. We observe that, whichever the values of $\gamma$, the reconstruction quality is sharply peaked around values of $\sigma/\gamma\nu_{\text{eff}}$ consistently around 1, thus supporting our heuristic argument. We also observe that  the peak value increases with $\gamma$. We recall that, according to the conditions imposed on $\gamma$ in \cref{prop:convFB} to guarantee theoretically the convergence of the sequence generated by PnP-FB algorithm, one has $\gamma < 2$. 
The values $\gamma=1.99$ and $\sigma/\gamma\nu_{\text{eff}}= 1$ (resp. $\gamma=1.99$ and $\sigma/\gamma\nu_{\text{eff}}= 0.9$) gives the best results for the PSNR (resp. SSIM). 

In \cref{fig:visual_gamma_sigma} we provide visual results for an image from the BSD10 test set, to the deblurring problem for different values of $\gamma$ and $\sigma$. The original unknown image $\overline{x}$ and the observed blurred noisy image are displayed in \cref{fig:visual_gamma_sigma}(a) and (g), respectively.
On the top row, we set $\sigma = 2 \nu_{\text{eff}}$, while the value of $\gamma$ varies from $1$ to $1.99$. We observe that the reconstruction quality improves when $\gamma$ increases, bringing the ratio $\sigma/\gamma\nu_{\text{eff}}$ closer to unity. Precisely, in addition to the PSNR and SSIM values increasing with $\gamma$, we can see that the reconstructed image progressively loses its oversmoothed aspect, showing more details. The best reconstruction for this row is given in \cref{fig:visual_gamma_sigma}(f), for $\gamma=1.99$.
On the bottom row, we set $\gamma=1$ and vary $\sigma$ from $1.3\, \nu_{\text{eff}}$ to $2.2\, \nu_{\text{eff}}$.
We see that sharper details appear in the reconstructed image when $\sigma$ decreases, again bringing the ratio $\sigma/\gamma\nu_{\text{eff}}$ closer to unity. The best reconstructions for this row are given in \cref{fig:visual_gamma_sigma}(h) and (i), corresponding to the cases $\sigma=1.3\, \nu_{\text{eff}}$ and $\sigma=1.6\, \nu_{\text{eff}}$, respectively.
Overall, as we have already noticed, the best reconstruction is obtained for  $\gamma=1.99$ and $\sigma/\gamma \nu_{\text{eff}}=1$, for which the associated image is displayed in \cref{fig:visual_gamma_sigma}(f). 
It can also be noticed that for all the considered values of $\sigma$, the best quality reconstructions are achieved when $\gamma=1.99/\mu$.
These results further support both our analysis of \cref{fig:metrics_gamma_sigma} and our heuristic argument for a linear scaling of $\sigma$ with $\gamma$, with scaling coefficient closely driven by the value $\nu_{\text{eff}}$.
The best strategy relies on choosing $\gamma$ the largest as possible, and deduce the value of $\sigma$ as $\sigma = \gamma \nu_{\text{eff}}$. In other words, our results suggest that the optimal values for both  the stepsize and training noise level can be determined as functions of parameters of the data fidelity term $f$, namely the Lipschitz constant $\mu$ of its gradient and the norm of the blurring kernel $h$, with $\gamma=1.99/\mu$ and $\sigma = 4 \nu \|h\|/\mu$.

\begin{figure}[t]
\centering
\includegraphics[width=.8\textwidth]{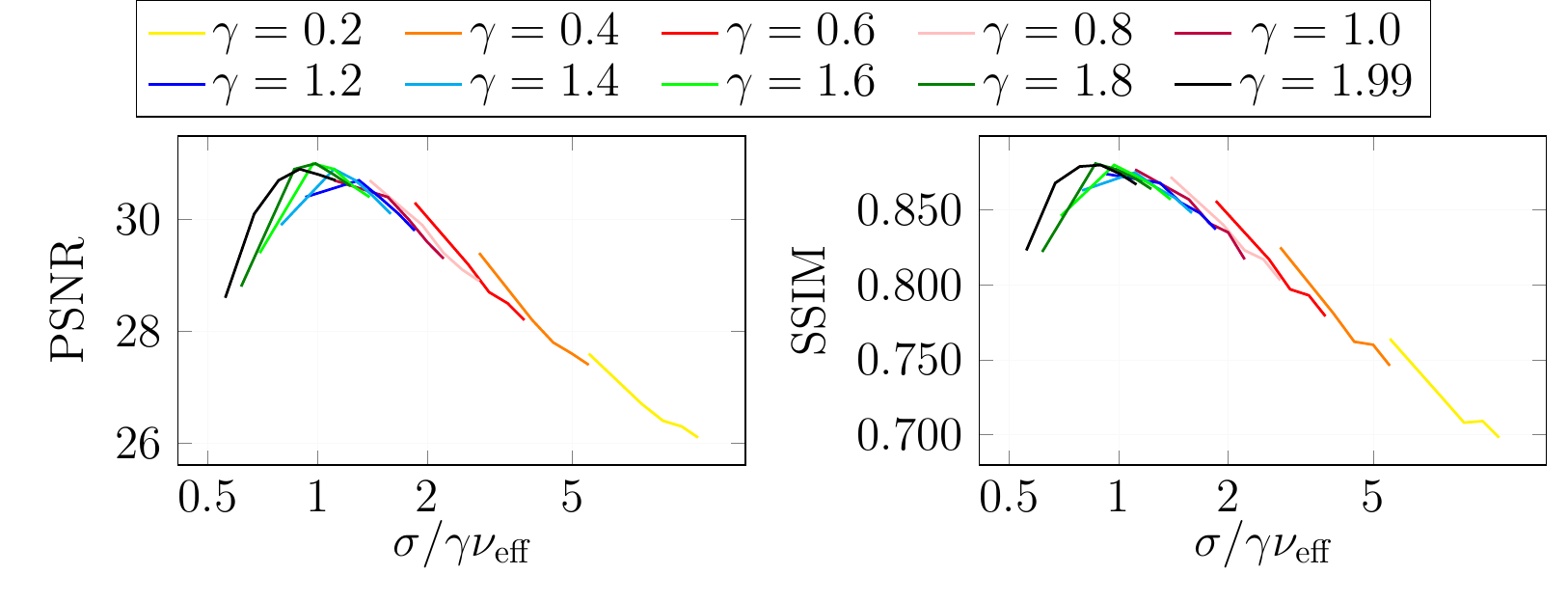}
\caption{\small  Influence of $\gamma\in]0,1.99]$ and $\sigma\in[0.005,0.01]$ on the reconstruction quality for the deblurring problem with kernel from \cref{fig:kernels}(a) on the BSD10 test set. For this experiment $\nu_{\text{eff}}=0.0045$. Left: average PSNR, right: average SSIM.}
\label{fig:metrics_gamma_sigma}
\end{figure}

\begin{figure}[t]
\centering
\includegraphics[width=.99\textwidth]{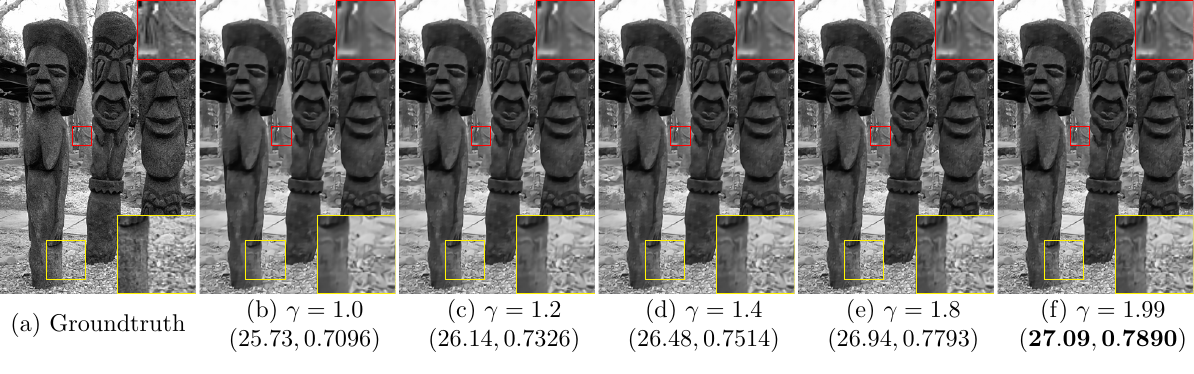}
\includegraphics[width=.99\textwidth]{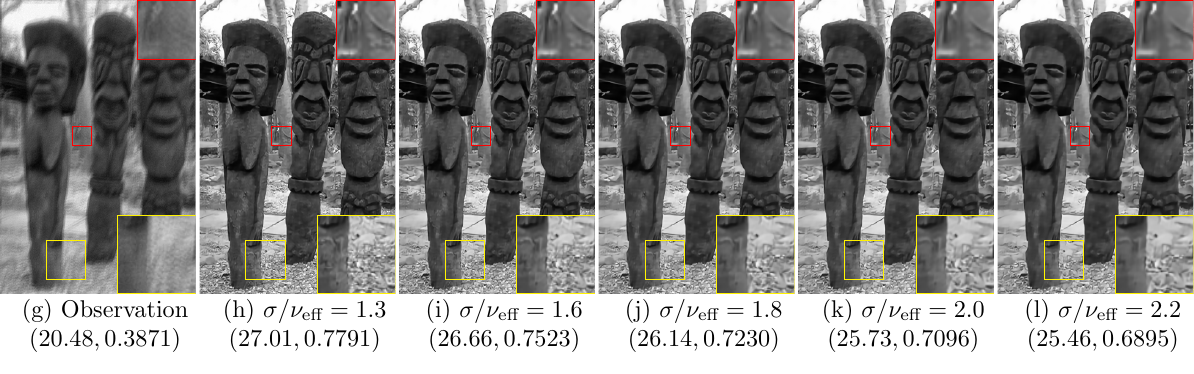}

\vspace{-0.2cm}

\caption{\small  Reconstructions of an image from the BSD10 test set obtained with the PnP-FB algorithm~\eqref{e:algFBpap} for the deblurring problem with kernel from \cref{fig:kernels}(a) for which  $\nu_{\text{eff}}=0.0045$. Top row: results for $\gamma\in [1,1.99]$ in algorithm \eqref{e:algFBpap} and $\sigma/\nu_{\text{eff}}=2$ (i.e. $\sigma=0.009$). Bottom row: results for $\sigma/\nu_{\text{eff}} \in [1.3,2.2]$ during training in \eqref{eq:loss_summarized} and $\gamma=1$ in algorithm~\eqref{e:algFBpap}.}
\label{fig:visual_gamma_sigma}
\end{figure}

\subsection{Comparison with other PnP methods}

In this section we investigate the behavior of the PnP-FB algorithm~\eqref{e:algFBpap} with $\net$ corresponding either to the proposed DnCNN provided in \cref{fig:arch_dncnn}, or to other denoisers. In this section, we aim to solve problem~\eqref{eq:invpb}, considering either grayscale or color images.

\paragraph{Grayscale images} 

We consider the deblurring problem~\eqref{eq:invpb} with $H$ associated with the kernels from \cref{fig:kernels}(a)-(h), $\nu=0.01$, evaluated on the BSD10 test set. 

We choose the parameters of our method to be the ones leading to the best PSNR values in \cref{fig:metrics_gamma_sigma}, i.e. $\sigma=0.009$ and $\gamma = 1.99$ corresponding to $\sigma/\gamma\nu_{\text{eff}}= 1$ for the kernel (a) of \cref{fig:kernels}, and we set $\lambda= 10^{-5}$.

We compare our method with other PnP-FB algorithms, where the denoiser corresponds either to RealSN \cite{ryu2019plug}, BM3D \cite{makinen2020collaborativeBM3D}, DnCNN \cite{zhang2017beyond}, or standard proximity operators \cite{combettes05, moreau1965proximite}. In our simulations, we consider the proximal operators of the two following functions: (i) the $\ell_1$-norm composed with a sparsifying operator consisting in the concatenation of the first eight Daubechies (db) wavelet bases \cite{carrillo2013sparsity, mallat2008wavelet}, and (ii) the total variation (TV) norm \cite{rudin1992nonlinear}. In both cases, the regularization parameters are fine-tuned on the Set12 dataset \cite{zhang2017beyond} to maximize the reconstruction quality. Note that the training process for RealSN has been adapted for the problem of interest.

\begin{figure}[t]
\centering
\includegraphics[width=1.0\textwidth]{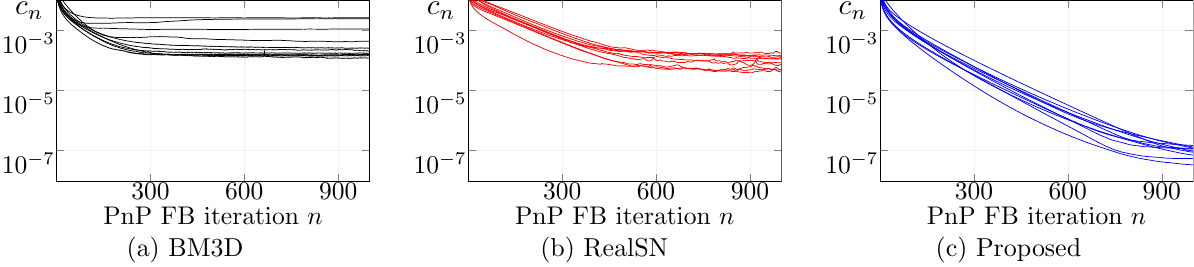}

\vspace{-0.2cm}

\caption{\small 
Convergence profile of the PnP-FB algorithm \eqref{e:algFBpap} for different denoisers plugged in as $\net$, namely BM3D (a), RealSN (b) and the proposed firmly nonexpansive DnCNN (c). Results are shown for the deblurring problem~\eqref{eq:invpb} with kernel from \cref{fig:kernels}(a). Each graph shows the evolution of $c_n$ defined in~\eqref{eq:crit_cn} for each image of the BSD10 test set.}
\label{fig:cv_comp}
\end{figure}

\begin{table}[ht]
\footnotesize
\setlength\tabcolsep{4pt}
\centering
\begin{tabular}{llccccccccc}
\hline
\multicolumn{2}{c}{\multirow{2}{*}{denoiser}} &   \multicolumn{8}{c}{kernel (see \cref{fig:kernels})} & \multirow{2}{*}{convergence} \\
\cline{3-10}
  && (a) & (b) & (c) & (d) &  (e) &  (f) &  (g) &  (h)\\
\hline
\multicolumn{2}{c}{Observation} & $23.36$ & $22.93$ & $23.43$ & $19.49$ & $23.84$ & $19.85$ & $20.75$ & $20.67$ \\
\multicolumn{2}{c}{RealSN \cite{ryu2019plug}} & $26.24$ & $26.25$ & $26.34$ & $25.89$ & $25.08$ & $25.84$ & $24.81$ & $23.92$ & \checkmark \\
\multicolumn{2}{c}{$\text{prox}_{\mu_{\ell_1}\|\Psi^\dagger \cdot\|_1}$} & $29.44$ & $29.20$ & $29.31$ & $28.87$ & $30.90$ & $30.81$ & $29.40$ & $29.06$ & \checkmark \\
\multicolumn{2}{c}{$\text{prox}_{\mu_\text{TV} \|\cdot\|_\text{TV}}$} &  $29.70$ & $29.35$ & $29.43$ & $29.15$ & $30.67$ & $30.62$ & $29.61$ & $29.23$ & \checkmark \\
\multicolumn{2}{c}{DnCNN \cite{zhang2017beyond}} & $29.82$ & $29.24$ & $29.26$ & $28.88$ & $30.84$ & $30.95$ & $29.54$ & $29.17$ & \ding{55} \\
\multicolumn{2}{c}{BM3D \cite{makinen2020collaborativeBM3D}} & $30.05$ & $29.53$ & $29.93$ & $29.10$ & $31.08$ & $30.78$ & $29.56$ & $29.41$ & \ding{55}\\
\multicolumn{2}{c}{Proposed} & $\mathbf{30.86}$ & $\mathbf{30.33}$ & $\mathbf{30.31}$ & $\mathbf{30.14}$ & $\mathbf{31.72}$ & $\mathbf{31.69}$ & $\mathbf{30.42}$ & $\mathbf{30.09}$ & \checkmark \\
\hline
\end{tabular}   
\caption{\small Average PSNR values obtained by different denoisers plugged in the PnP-FB algorithm \eqref{e:algFBpap}, to solve the deblurring problem~\eqref{eq:invpb} with kernels of \cref{fig:kernels}(a)-(h) considering the BSD10 test set. The last row provides the average SSIM values for the observed blurred image $y$ in each experimental setting. Each algorithm is stopped after a fixed number of iterations equal to 1000. The best PSNR values are indicated in bold.}
\label{tab:FB_grayscale}
\end{table}

We first check the convergence of the PnP-FB algorithm considering the above-mentioned different denoisers. We study the quantity $(c_n)_{n\in \NN}$ defined in~\eqref{eq:crit_cn}, considering the inverse problem~\eqref{eq:invpb} with kernel in \cref{fig:kernels}(a). 
\cref{fig:cv_comp} shows the $c_n$ values with respect to the iterations $n \in \{1, \ldots, 1000\}$ of the PnP-FB algorithm for various denoisers $\net$: BM3D (\cref{fig:cv_comp}(a)), RealSN (\cref{fig:cv_comp}(b)), and the proposed firmly nonexpansive DnCNN (\cref{fig:cv_comp}(c)). 
On the one hand, as expected, PnP-FB with our network, which has been trained to be firmly nonexpansive, shows a convergent behavior with monotonic decrease of $c_n$. On the other hand,  we notice that the PnP-FB algorithm with BM3D or RealSN does not converge since $(c_n)_{n\in \NN}$ does not tend to zero, which confirms that neither BM3D nor RealSN are firmly nonexpansive. Note that, in the case of RealSN \cite{ryu2019plug}, nonexpansive-based constraints are also imposed on the network. Nevertheless, in practice these constraints are more restrictive than our approach (see \cref{rk:nonexp}\ref{rk:nonexp:iii}), and less accurate (see \cite[Assumption A]{ryu2019plug}).

In \cref{tab:FB_grayscale} we provide a quantitative analysis of the restoration quality obtained on the BSD10 dataset with the different denoisers. Although DnCNN and BM3D do not benefit from any convergence guarantees, we report the SNR values obtained after 1000 iterations. For all the eight considered kernels, the best PSNR values are delivered by the proposed firmly nonexpansive DnCNN. A possible explanation for the improved performance of our method compared to DnCNN, BM3D and RealSN is the stability of our PnP algorithm induced by the firm nonexpansiveness of our denoiser $\net$.

In \cref{fig:vis_sota} we show visual results and associated PSNR and SSIM values obtained with the different methods on the deblurring problem \eqref{eq:invpb} with kernel from \cref{fig:kernels}(a). We notice that despite good PSNR and SSIM values, the proximal methods yield reconstructions with strong visual artifacts (wavelet artifacts in \cref{fig:vis_sota}(c) and cartoon effects in \cref{fig:vis_sota}(d)). PnP-FB with BM3D provides a smoother image with more appealing visual results, yet some grid-like artifacts appear in some places (see e.g. red boxed zoom in \cref{fig:vis_sota}(e)). RealSN introduces ripple and dotted artifacts, while DnCNN introduces geometrical artifacts, neither of those corresponding to features in the target image. For this image, we can observe that our method provides better visual results as well as higher PSNR and SSIM values than other methods.

\begin{figure}[t]
\centering
\includegraphics[width=0.9\textwidth]{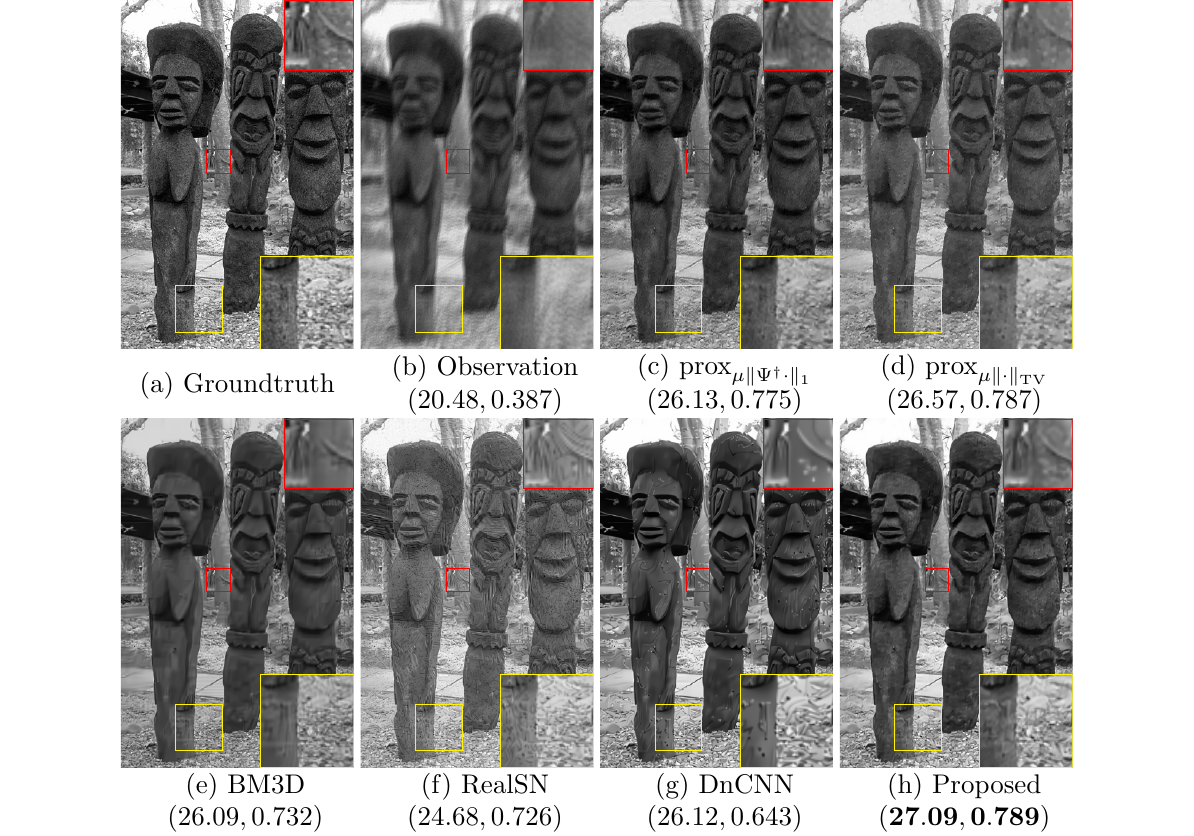}%

\vspace{-0.2cm}

\caption{\small
Reconstructions of an image from the BSD10 test set obtained with the PnP-FB algorithm~\eqref{e:algFBpap}, considering different denoisers as $\net$, for the deblurring problem with kernel from \cref{fig:kernels}(a) and $\nu=0.01$. Associated (PSNR, SSIM) values are indicated below each image, best values are highlighted in bold. Each algorithm is stopped after a fixed number of iteration equal to 1000.}
\label{fig:vis_sota}
\end{figure}

\begin{figure}
\centering
\includegraphics[width=0.8\textwidth]{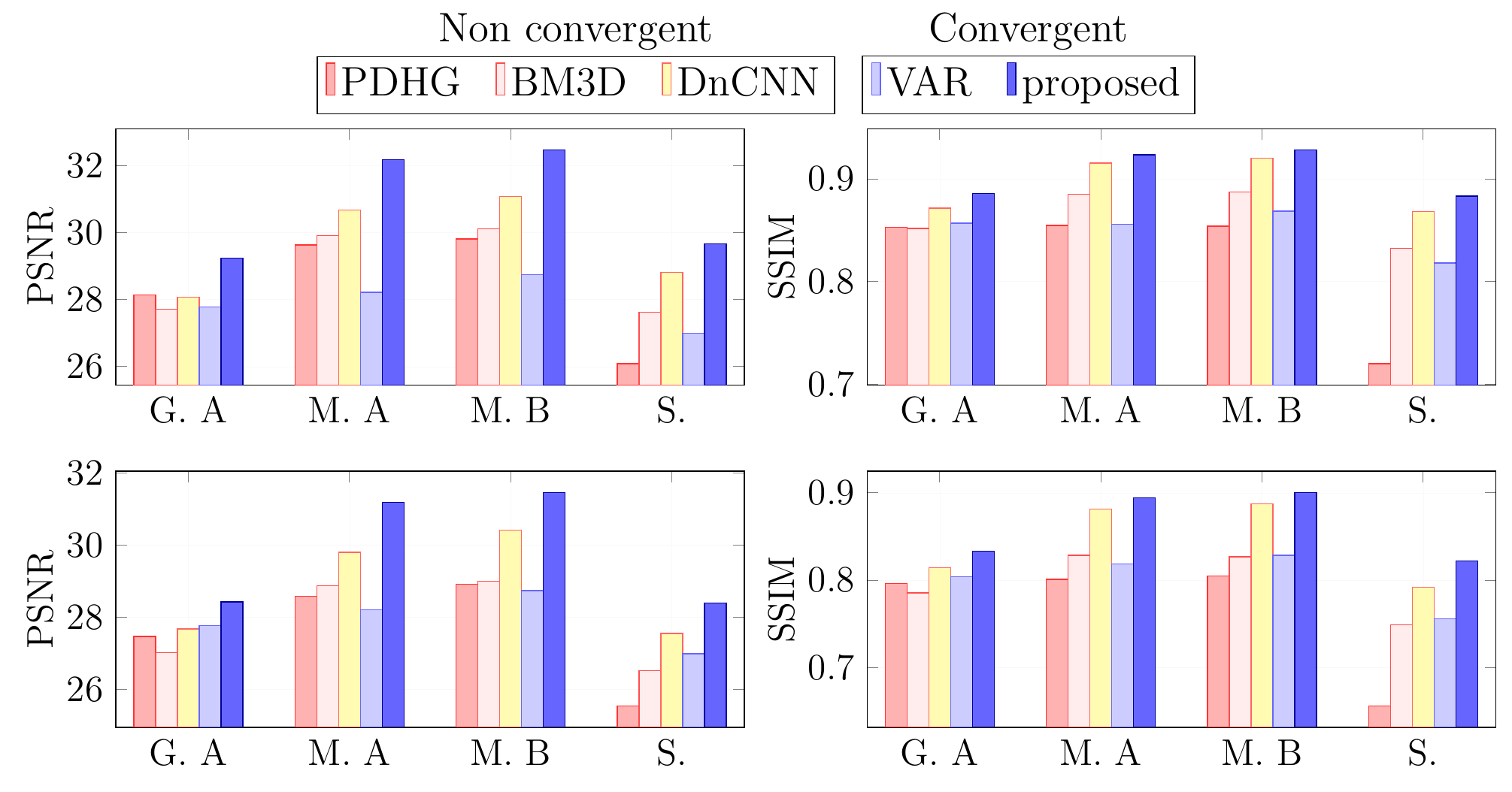}

\vspace{-0.2cm}

\caption{\small
Average PSNR and SSIM values obtained on the Flickr30 (top) and BSD500 (bottom) test sets using the experimental setups of \cite{bertocchi2020deep}: G. A, M. A, M. B, and S., for different methods.}
\label{fig:barplot}
\end{figure}

\begin{figure}
\centering
\includegraphics[width=0.90\textwidth]{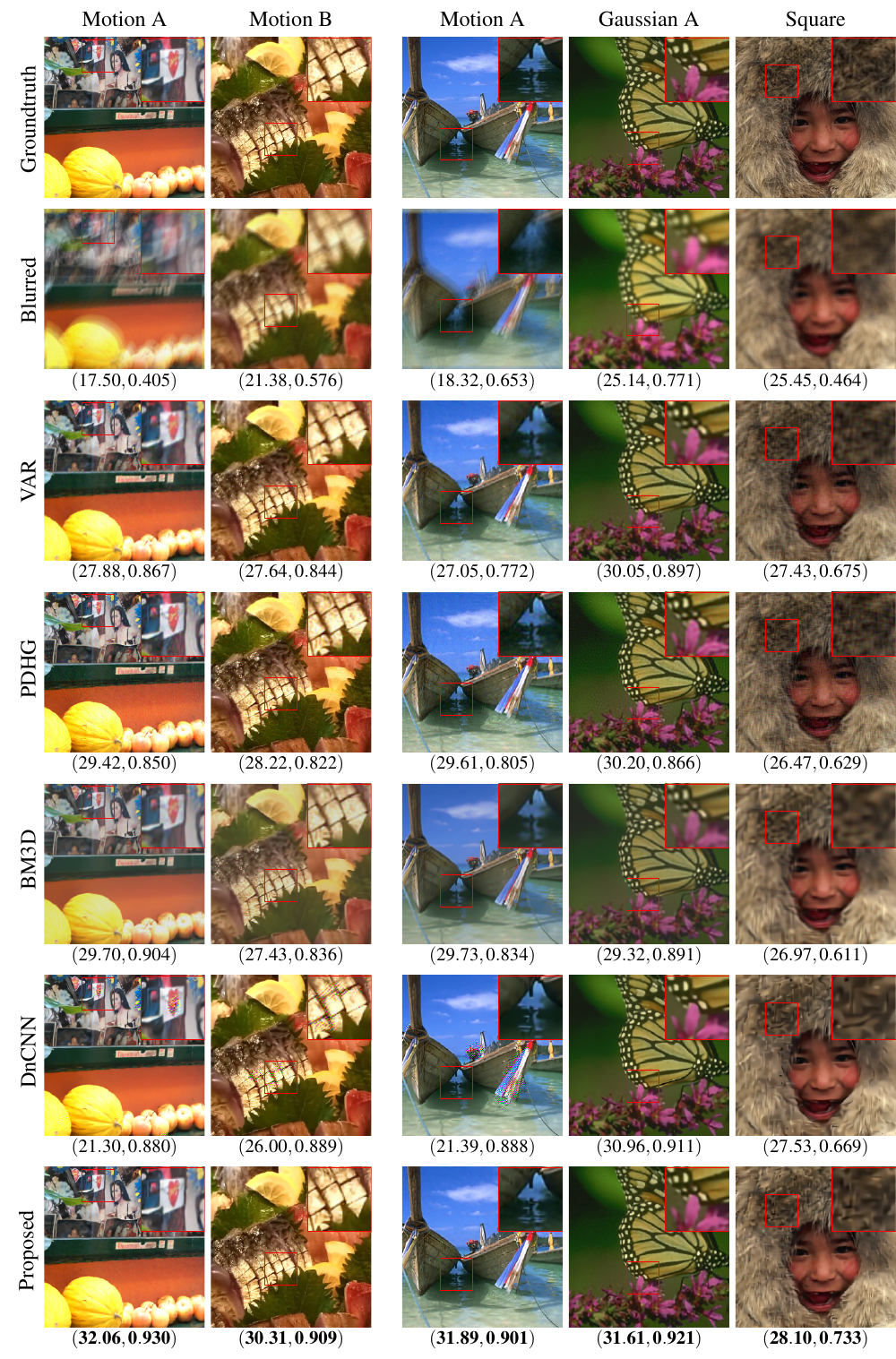}
\caption{\small 
Visual results on Flickr30 (first two columns) and BSD500 (last three columns) datasets for different methods (VAR, PDHG, PnP-FB with BM3D, PnP-FB with DnCNN and ours) for different deblurring problems with $\nu=0.01$. Associated (PSNR, SSIM) values are indicated below each image, best values are highlighted in bold. Except PDHG, each algorithm is stopped after 1000 iterations.}
\label{fig:vis_mc_1}
\end{figure}

The results presented in this section show that the introduction of the Jacobian regularizer in the training loss~\eqref{eq:loss_summarized} not only allows to build convergent PnP-FB methods, but also improves the reconstruction quality over both FB algorithms involving standard proximity operators, and existing PnP-FB approaches.

\paragraph{Color images} 

We now apply our strategy to a color image deblurring problem of the form \eqref{eq:invpb}, where the noise level and blurring operator are chosen to reproduce the experimental settings of \cite{bertocchi2020deep}, focusing on the four following experiments: First, the Motion A (M. A) setup with blur kernel (h) from \cref{fig:kernels} and $\nu=0.01$; second, the Motion B (M. B) setup with blur kernel (c) from \cref{fig:kernels} and $\nu=0.01$; third, the Gaussian A (G. A) setup with kernel (i) from \cref{fig:kernels} and $\nu=0.008$; finally, the Square (S.) setup with kernel (j) from \cref{fig:kernels} $\nu=0.01$. 
The experiments in this section are run on the Flickr30 dataset and on the test set from BSD500\footnote{As in \cite{bertocchi2020deep}, we consider $256\times 256$ centered-crop versions of the datasets.}. We compare our method on these problems with the variational method VAR from \cite{bertocchi2020deep}, and three PnP algorithms, namely PDHG \cite{meinhardt2017learning}, and the PnP-FB algorithm combined with the BM3D or DnCNN denoisers.  It is worth mentioning that, among the above mentioned methods, only the proposed approach and VAR have convergence guaranties. The results for PDHG and VAR are borrowed from \cite{bertocchi2020deep}.

For the proposed method, we choose $\gamma=1.99$ in the PnP-FB algorithm \eqref{e:algFBpap}, and we keep the same DnCNN architecture for $\net$ given in \cref{fig:arch_dncnn}, only changing the number of input/output channels to $C=3$. We first pretrain our network as described in \cref{Ssect:exp_settings}. We then keep on training it considering the loss given in~\eqref{eq:loss_summarized}, in which we set  $\varepsilon=5\times 10^{-2}$, $\lambda=10^{-5}$, and $\sigma=0.007$.

The average PSNR and SSIM values obtained with the different considered reconstruction methods, and for the different experimental settings, are reported in \cref{fig:barplot}. This figure shows that our method significantly improves reconstruction quality over the other considered PnP methods.

Visual comparisons are provided in~\cref{fig:vis_mc_1} for the different approaches. These results show that our method also yields better visual results. The reconstructed images contain finer details and do not show the oversmoothed appearance of PnP-FB with DnCNN or slightly blurred aspect of PnP-FB with BM3D. Note that, in particular, thanks to its convergence, the proposed method shows a homogeneous performance over all images, unlike PnP-FB with DnCNN that may show some divergence effects (see the boat picture for Motion A, row (f)). One can observe that the improvement obtained with our approach are more noticeable on settings M. A and M. B than on G. A and S. 

% **********************************************************
% **********************************************************
\section{Conclusion} 
\label{sec:5}
% **********************************************************
% **********************************************************

In this paper, we investigated the interplay between PnP \mbox{algorithms} and monotone operator theory, in order to propose a sound mathematical framework yielding both convergence guarantees and a good reconstruction quality in the context of computational imaging. 

First, we established a universal approximation theorem for a wide range of MMOs, in particular the new class of stationary MMOs we have introduced. This theorem constitutes the theoretical backbone of our work by proving that the resolvents of these MMOs can be approximated by building nonexpansive NNs. Leveraging this result, we proposed to learn MMOs in a supervised manner for PnP algorithms. A main advantage of this approach is that it allows us to characterize their limit as a solution to a variational inclusion problem. 

Second, we proposed a novel training loss to learn the resolvent of an MMO for high dimensional data, by imposing mild conditions on the underlying NN architecture. This loss uses information of the Jacobian of the NN, and can be optimized efficiently using existing training strategies. Finally, we demonstrated that the resulting PnP algorithms grounded on the FB scheme have good convergence properties when trained by leveraging the proposed Jacobian regularization. We showcased our method on an image deblurring problem and showed that the proposed PnP-FB algorithm outperforms both standard variational methods and state-of-the-art PnP algorithms. Interestingly, our results suggest that the optimal values for both the stepsize and training noise level can be determined automatically as functions of parameters of the data fidelity term, namely the Lipschitz constant $\mu$ of its gradient and the norm of the blurring kernel.

Note that the ability of approximating resolvents as we did is directly applicable to a much wider class of iterative algorithms than the forward-backward splitting \cite{combettes2020fixed}. In addition, we could consider a wider scope of applications than the restoration problems addressed in this work, such as image super-resolution or reconstruction, and future works also include the investigation of automatic methods for choosing $\lambda$.

\bibliographystyle{siamplain}
\bibliography{references}

%%%%%%%%%%%%%%%%%%%%%%%%%%%%%%%%%%%%%%%%%%%%%%%%%%%%%%%
%%%%%%%%%%%%%%%%%%%%%%%%%%%%%%%%%%%%%%%%%%%%%%%%%%%%%%%
\appendix

\section{Nonexpansive networks: additional results}
\label{apx:nonexpansive}

Based on the results in \cite{combettes2019}, useful sufficient conditions for a NN to be nonexpansive are given below:
\begin{proposition}\label{p:propnonexpan}
Let $Q$ be a feedforward NN as defined in \cref{mod:feedfwd}. Assume that, for every $m\in \{1,\ldots,M\}$, $R_{m}$ is $\alpha_{m}$-averaged with $\alpha_{m}\in [0,1]$. Then $Q$ is nonexpansive if one of the following conditions holds:
\begin{enumerate}
\item\label{p:propnonexpani} $\|W_{1}\|\cdots \|W_{M}\| \le 1$;
\item for every $m\in \{1,\ldots,M-1\}$, $\HH_{m}=\RR^{K_{m}}$ with $K_{m}\in \mathbb{N}\setminus\{0\}$, $R_{m}$ is a separable activation operator, in the sense that there exist real-valued one-variable functions $(\rho_{m,k})_{1 \le k \le K_{m}}$ such that, for every $x = (\xi_{k})_{1\le k \le K_{m}} \in \HH_{m}$,  $R_m(x)=(\rho_{m,k}(\xi_{k}))_{1\le k \le K_{m}}$, and
\begin{multline}\label{e:Lipboundsep}
(\forall \Lambda_{1}\in \mathcal{D}_{1,\{1-2\alpha_{1},1\}})\ldots (\forall \Lambda_{M-1}\in \mathcal{D}_{M-1,\{1-2\alpha_{M-1},1\}})\\
\|W_M
\Lambda_{M-1}\cdots \Lambda_1 W_1\| \le 1,
\end{multline}
where, for every $m\in \{1,\ldots,M-1\}$, $\mathcal{D}_{m,\{1-2\alpha_{m},1\}}$ denotes the set of diagonal $K_{m}\times K_{m}$ matrices with diagonal terms equal either to $1-2\alpha_{m}$ or $1$;
\item\label{p:propnonexpaniv} for every $m\in \{1,\ldots,M\}$, $\HH_{m}=\RR^{K_{m}}$ with $K_{m}\in \mathbb{N}\setminus\{0\}$ and $W_{m}$ is a matrix with nonnegative elements, $(R_{m})_{1\le m \le M-1}$ are separable activation operators, and
\begin{equation}\label{e:propnonexpaniv}
\|W_M\cdots W_1\| \le 1.
\end{equation}
\end{enumerate}
\end{proposition}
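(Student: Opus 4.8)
The plan is to reduce each of the three conditions to a single operator-norm estimate obtained by ``linearizing'' the network along the segment joining two inputs. Write $u_{0}=x$ and, for $m\in\{1,\dots,M\}$, $u_{m}=T_{m}(u_{m-1})=R_{m}(W_{m}u_{m-1}+b_{m})$, so that $Q(x)=u_{M}$. Since an $\alpha_{m}$-averaged operator with $\alpha_{m}\in[0,1]$ is in particular nonexpansive, condition~(i) is immediate: each $T_{m}$ is $\|W_{m}\|$-Lipschitzian, hence $Q$ is $\|W_{1}\|\cdots\|W_{M}\|$-Lipschitzian, which is at most $1$. For the separable cases I would first record that a separable $\alpha_{m}$-averaged operator acts on differences of inputs as a diagonal contraction with controlled entries: decomposing $R_{m}=(1-\alpha_{m})\Id+\alpha_{m}N_{m}$ with $N_{m}$ nonexpansive shows that $N_{m}$ is separable with $1$-Lipschitzian scalar components, so $\rho_{m,k}=(1-\alpha_{m})\,\mathrm{id}+\alpha_{m}\nu_{m,k}$ with $\nu_{m,k}$ $1$-Lipschitzian, and every secant slope of $\rho_{m,k}$ lies in $[1-2\alpha_{m},1]$. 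Consequently, for any inputs $x,y$ there exist diagonal matrices $\Lambda_{1},\dots,\Lambda_{M-1}$, depending on $x,y$, with diagonal entries in $[1-2\alpha_{m},1]$, such that $u_{m}(x)-u_{m}(y)=\Lambda_{m}W_{m}\big(u_{m-1}(x)-u_{m-1}(y)\big)$ for $m\le M-1$; using only the nonexpansiveness of the last activation $R_{M}$ then gives
\begin{equation*}
\|Q(x)-Q(y)\|\le\|W_{M}\Lambda_{M-1}W_{M-1}\cdots\Lambda_{1}W_{1}\|\,\|x-y\|.
\end{equation*}

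For condition~(ii) it remains to pass from this interval-valued family of diagonal matrices to the two-point set $\mathcal{D}_{m,\{1-2\alpha_{m},1\}}$ appearing in \eqref{e:Lipboundsep}. The map sending a tuple of diagonal matrices to $W_{M}\Lambda_{M-1}W_{M-1}\cdots\Lambda_{1}W_{1}$ is affine in each diagonal entry; composing it with the convex operator norm, the supremum over the box $\prod_{m}[1-2\alpha_{m},1]^{K_{m}}$ of admissible diagonals is attained at a vertex, that is, at a tuple with every $\Lambda_{m}\in\mathcal{D}_{m,\{1-2\alpha_{m},1\}}$. Hence \eqref{e:Lipboundsep} forces $\|W_{M}\Lambda_{M-1}W_{M-1}\cdots\Lambda_{1}W_{1}\|\le1$ for all admissible $\Lambda_{m}$, in particular for those produced by the linearization above, so $Q$ is nonexpansive.

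For condition~(iii) I would instead exploit the nonnegativity of the weights. Since $\alpha_{m}\le1$, each admissible $\Lambda_{m}$ has diagonal entries in $[1-2\alpha_{m},1]\subseteq[-1,1]$, so its entrywise absolute value is dominated by $\Id$; as all $W_{m}$ have nonnegative entries, $\big|W_{M}\Lambda_{M-1}W_{M-1}\cdots\Lambda_{1}W_{1}\big|$ is then dominated entrywise by $W_{M}W_{M-1}\cdots W_{1}$. Since $\|C\|\le\|D\|$ whenever $0\le|C|\le D$ entrywise (indeed $\|Cv\|\le\|D|v|\|\le\|D\|$ for every unit vector $v$), we obtain $\|W_{M}\Lambda_{M-1}W_{M-1}\cdots\Lambda_{1}W_{1}\|\le\|W_{M}\cdots W_{1}\|\le1$ and $Q$ is nonexpansive. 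The only genuinely delicate point in the whole argument is this bookkeeping in the separable case: the $\Lambda_{m}$ must be permitted to depend on $(x,y)$ while their entries remain confined to the fixed interval $[1-2\alpha_{m},1]$, and replacing that interval by its endpoints is legitimate precisely because of the multiaffinity of the matrix product together with the convexity of the norm; everything else reduces to composing Lipschitz maps and to elementary entrywise matrix inequalities. Alternatively, conditions~(ii) and~(iii) can be read off directly from the Lipschitz-constant bounds of \cite{combettes2019}.
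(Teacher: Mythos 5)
Your proof is correct. Note that the paper itself offers no argument for this proposition: it is stated as a consequence of the results in \cite{combettes2019}, and your self-contained derivation (secant-slope diagonal linearization of the separable averaged activations with slopes in $[1-2\alpha_m,1]$, reduction to the vertices $\mathcal{D}_{m,\{1-2\alpha_m,1\}}$ via coordinate-wise convexity of the norm of a multiaffine matrix product, and entrywise domination plus $\|C\|\le\|D\|$ for $|C|\le D$ in the nonnegative-weights case) is essentially the argument underlying that reference, so it matches the paper's intended route.
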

Note that the $\alpha$-averageness assumption on $(R_{m})_{1\le m \le M-1}$ means that, for every $m\in \{1,\ldots,M-1\}$, there exists a nonexpansive operator $\widetilde{R}_{m}\colon \HH_{m}\to \HH_{m}$ such that \linebreak${R_{m}=(1-\alpha_{m})\Id + \alpha_{m} \widetilde{R}_{m}}$.
Actually, most of the activation operators employed in neural networks (ReLU, leaky ReLU, sigmoid, softmax,...) satisfy this assumption with $\alpha_{m}= 1/2$ \cite{combettes2018deep}. 
A few others like the sorting operator used in max-pooling correspond to a value of the constant $\alpha_{m}$ larger than $1/2$ \cite{combettes2019}. It is also worth mentioning that, although Condition~\ref{p:propnonexpani} in \cref{p:propnonexpan} is obviously the simplest one, it is usually quite restrictive. If equation \eqref{e:propnonexpaniv} is weaker than Condition~\ref{p:propnonexpani}, Condition~\ref{p:propnonexpaniv} requires yet the network weights to be nonnegative as shown in \cite{combettes2019}.

By summarizing the results of the previous section, \cref{fig:NNmodel} shows a feedforward NN architecture for MMOs, for which \cref{p:propnonexpan} can be applied. It can be noticed that \eqref{e:JAQ} induces the presence of a skip connection in the global structure.

\begin{figure}
\centering
\includegraphics[width=0.95\textwidth]{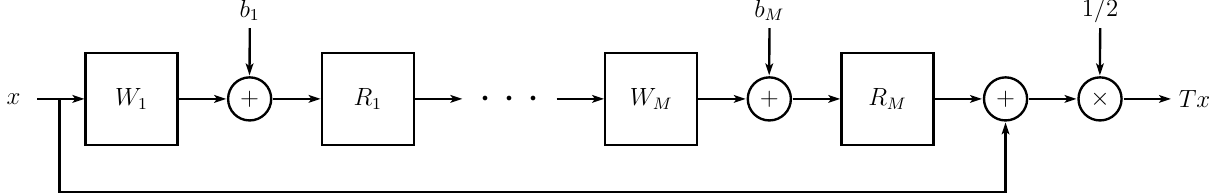}
\caption{\small Neural network modelling the resolvent of a maximally monotone operator. The weight operators $(W_{m})_{1\le m \le M}$ have to be set according to the conditions provided in \cref{p:propnonexpan}.}%
\label{fig:NNmodel}
\end{figure}

\section{Stationnary MMOs: additional proofs}
\label{apx:statMMO}

\begin{proof}[Proof of Example \ref{ex:stat:separabilityter}]
As $B$ is an MMO and $U$ is surjective, $U^*B U$ is an MMO  \cite[Corollary 25.6]{bauschke2017convex}. We are thus guaranteed
that $\ran(\Id+A)=\HH$ \cite[Theorem 21.1]{bauschke2017convex}.\\
For every $k\in \{1,\ldots,K\}$, let
\begin{align}
& D_{k}\colon \HH\to \HH_{k}\colon (x^{(\ell)})_{1\le \ell \le K}\mapsto x^{(k)}\label{e:defdec}\\
&\Pi_{k} = D_{k}U.\label{e:defdecap}
\end{align}
It can be noticed that
\begin{equation}
\sum_{k=1}^{K} \Pi_k^* \Pi_k = U^* U = \Id.
\end{equation}
Let $(p,q)\in\HH^2$. Every $(p',q')\in A(p)\times A(q)$ is such
\begin{align}
    p' = U^*r \text{ and } q' = U^* s, \label{e:ppUr-qpUs}
\end{align}
where $r\in B(Up)$ and $s\in B(Uq)$. Using \eqref{e:defdecap} and \eqref{e:ppUr-qpUs} yield, for every $k\in \{1,\ldots,K\}$,
\begin{equation}\label{e:maximalBkm}
    \scal{\Pi_k (p-q)}{\Pi_k(p'-q')}
    =  \scal{D_k Up-D_k Uq}{D_kr-D_ks}.
\end{equation}
Because of the separable form of $B$, $D_k r \in B_k(D_k Up)$ and $D_k s \in B_k(D_k Uq)$. It then follows from
\eqref{e:maximalBkm}  and the monotonicity of $B_k$ that 
\begin{equation}
    \scal{\Pi_k (p-q)}{\Pi_k(p'-q')} \ge 0.
\end{equation}
By invoking \cref{prop:immSMMO}\ref{prop:immSMMOii}, we conclude that $A$ is a stationary MMO.
\end{proof}

\begin{proof}[Proof of Example \ref{ex:stat:separabilityf}]
This corresponds to the special case of Example \ref{ex:stat:separabilityter} when, for every $k\in \{1,\ldots,K\}$, $\HH_k = \RR$ (see \cite[Theorem 16.47,Corollary 22.23]{bauschke2017convex}).
\end{proof}

\begin{proof}[Proof of Example \ref{ex:stat:linearity}]
If $B+B^*$ is nonnegative, $B$, hence $A$, are maximally monotone and  $J_{A}= J_B(\cdot-c)$ is firmly nonexpansive. As a consequence, the reflected resolvent of $B$, given by $Q = 2(\Id+B)^{-1}-\Id$, is nonexpansive. For every $k\in \{1,\ldots,K\}$, let $D_{k}$ be the decimation operator defined in \eqref{e:defdec} and let
\begin{align}
&\Pi_{k} = D_{k}\\
& \Omega_{k} = Q^* D_{k}^*D_{k}Q
\end{align}
$\Pi_{k}$ satisfies \eqref{e:stat21} and, since 
\begin{equation}
\Big\|\sum_{k=1}^{K} \Omega_{k}\Big\| = \|Q^*Q\| = \|Q\|^{2}\le 1,
\end{equation}
\eqref{e:stat22} is also satisfied.
In addition, for every $(x,y)\in \HH^{2}$ and, for every $k\in \{1,\ldots,K\}$, we have
\begin{align}
&\|\Pi_{k}\big(2J_{A} (x)-x -2J_{A} (y)+y\big)\|^2\nonumber\\ 
&= 
\|\Pi_{k}\big(2J_{B} (x-c)-x+c -2J_{B} (y-c)+y-c\big)\|^2\nonumber\\
&=  \scal{x-y}{\Omega_{k}(x-y)},
\end{align}
which shows that $A$ is a stationary MMO.\\
Note finally that, if $B$ is skewed or cocoercive linear operator, then $B+B^*$ is nonnegative.
\end{proof}

\begin{proof}[Proof of Example \ref{ex:stat:inverse}]
The resolvent of $A^{-1}$ is given by $J_{A^{-1}} = \Id - J_A$. In addition, since $A$ is stationary, there exist bounded linear operators $(\Pi_k)_{1\le k \le K}$ and self-adjoint operators $(\Omega_k)_{1\le k \le K}$ satisfying \eqref{e:stat1}-\eqref{e:stat22}.
For every $k\in \{1,\ldots,K\}$, we have then, for every $(x,y)\in \HH^2$,
\begin{align}
\|\Pi_k \big( 2 J_{A^{-1}}(x) - x - 2 J_{A^{-1}}(y) + y \big) \|^2
&=  \|\Pi_k \big( 2 J_{A}(y) - y - 2 J_{A}(x) + x \big) \|^2    \nonumber   \\
&\le \scal{y-x}{\Omega_k(y-x)}.
\end{align}
\end{proof}

\begin{proof}[Proof of Example \ref{ex:stat:lin}]
$B=\rho A(\cdot/\rho)$ is maximally monotone and its resolvent reads $J_B=\rho J_A(\cdot/\rho)$ \cite[Corollary 23.26]{bauschke2017convex}. Using the same notation as previously, for every $k\in \{1,\ldots,K\}$ and for every $(x,y)\in \HH^2$,
\begin{align}
\|\Pi_k \big( 2 J_B(x) - x - 2 J_B(y) + y \big) \|^2 
&=  \rho^2\left\|\Pi_k \Big( 2 J_{A}\Big(\frac{x}{\rho}\Big) - \frac{x}{\rho} - 2 J_{A}\Big(\frac{y}{\rho}\Big) + \frac{y}{\rho}\Big) \right\|^2    \nonumber   \\
&\le \scal{y-x}{\Omega_k(y-x)}.
\end{align}
\end{proof}

\section{Power iterative method}
\label{apx:powit}
The power iterative algorithm allows the spectral norm of a matrix $B$ to be computed efficiently \cite{golub2013matrix}. In our case, we apply this algorithm with $B=\jac \netav(x)$ at some point $x$. Notice that we do not need to compute or store the full Jacobian: in practice, at each iteration of this algorithm, we just need to apply $\jac \netav(x)$ to some vector and apply $\jac \netav(x)^\top$ to another vector. These products can be computed by automatic differentiation. \\
Due to memory limitations, all our experiments are performed with 5 iterations of the power method. We nevertheless observe in practice that this is sufficient to ensure convergence of the PnP-FB algorithm.

\end{document}